\newtheorem{thm}{Theorem}[section]
\newtheorem{cor}[thm]{Corollary}
\newtheorem{lem}[thm]{Lemma}
\newtheorem{rem}[thm]{Remark}
\newtheorem{defn}[thm]{Definition}
\numberwithin{equation}{section}
\newenvironment{customthm}[1]
{\innercustomthm}
{\endinnercustomthm}
\numberwithin{equation}{section}
\author{Om  Ahuja, Asena \c{C}etinkaya, and Naveen Kumar Jain}
\title {Analytic functions with conic domains associated with certain generalized $q$-integral operator
\footnotetext{\textit{\textnormal{2010} AMS Mathematics Subject Classification:} 30C45, 30C50, 30C80
\\ \textit{Key words and phrases:} Quantum calculus, $q$-derivative operator, $q$-difference operator, $q$-gamma function, $q$-integral operator, conic domain, $k$-uniformly starlike functions of order gamma, coefficient estimate.
}}
\date{ }
\newcommand{\pe}{{\mathcal P}}
\newcommand{\de}{{\mathbb D}}
\newcommand{\ce}{{\mathbb C}}
\DeclareMathOperator{\RE}{Re}
\begin{document}
\maketitle

\begin{abstract}
In this paper, we define a new subclass of $k$-uniformly starlike functions of order $\gamma,\ (0\leq\gamma<1)$ by using certain generalized	$q$-integral operator. We explore geometric interpretation of the functions in this class by connecting it with conic domains. We also investigate $q$-sufficient coefficient condition, $q$-Fekete-Szeg\"{o} inequalities, $q$-Bieberbach-De Branges type coefficient estimates and radius problem for functions in this class. We conclude this paper by introducing an analogous subclass of  $k$-uniformly convex functions of order $\gamma$ by using the generalized $q$-integral operator. We omit the results for this new class because they can be directly translated from the corresponding results of our main class.
\end{abstract}

\maketitle

\section{Introduction}
Let  $\mathcal{A}$ denote the class of functions of the form
\begin{equation}\label{eq:series}
f(z)=z+\sum_{n=2}^\infty a_{n}z^{n},
\end{equation}
that are analytic in  the open unit disc $\de:=\{z : |z| < 1\}$. Denote by $\pe$ the class of  functions $p$ which are analytic  and have  positive real part in $\mathbb{D}$ with $p(0)=1$. Let  $\Omega$ be the family of Schwarz functions  $w$ which are analytic in  $\de$ satisfying the conditions $w(0)=0$, $|w(z)|<1$ for all $z \in \de$. If $f$ and $g$ are analytic functions in  $\de$, then we say that $f$ is subordinate to $g$, written as $f\prec g$, if there exists a Schwarz function $w\in \Omega\ $ such that $f(z)=g(w(z))$. We also note that if $g$ is univalent in  $\de$, then by subordination rules we get  $f(0) =g(0)$ and $f(\de)\subset g(\de).$

We denote the class $\mathcal{S}$ of all functions in $\mathcal{A}$ that are univalent in  $\de$. Also, let $\mathcal{ST}$ and $\mathcal{CV}$ denote the subclasses of $\mathcal{S}$ that are starlike and convex, respectively. For definitions and properties of these classes, one may refer to the survey article by first author \cite{Ahuja}.

In 1991, Goodman \cite{Good} introduced the concept of uniform  convexity  and uniform starlike functions in  $\mathcal{S}$. In fact, he defined such uniform classes, denoted by $\mathcal{UCV}$ and $\mathcal{UST}$, by their geometrical properties. A function $f$ in $\mathcal{A}$ is said to be uniformly convex (uniformly starlike) in $\de$ if $f$ is in $\mathcal{CV}$ $(\mathcal{ST})$  and has  the property that for every circular arc $\gamma$ contained in $\de$ with center $\xi$ also in $\de$, the arc $f(\gamma)$  is convex (starlike) with respect to $f(\xi)$.

In 1993, Ronning \cite{Ron1} proved the basis for further investigation of the classes $\mathcal{UCV}$ and $\mathcal{UST}$.
\begin{customthm}{A}$(\cite{Ron1})$ If $f\in\mathcal{A}$, then $f\in\mathcal{UCV}$ if and only if
$$\RE\bigg(1+\frac{zf''(z)}{f'(z)}\bigg)> \bigg|\frac{zf''(z)}{f'(z)}\bigg|\ \ (z\in\de).$$
\end{customthm}	
Applying the classic Alexander Theorem found by Alexander \cite{Alex} in 1915, Ronning \cite{Ron1} obtained a characterization of the class $\mathcal{UST}$.
\begin{customthm}{B}$(\cite{Ron1})$ If a function $f$ belongs to $\mathcal{A}$, then $f$ belongs to $\mathcal{UST}$ if and only if
$$\RE\bigg(\frac{zf'(z)}{f(z)}\bigg)> \bigg|\frac{zf'(z)}{f(z)}-1\bigg|\ \ (z\in\de).$$
\end{customthm}
For $k\geq 0$, Kanas \textit{et al.} \cite{Kanas1} introduced the class of  $k$-uniformly starlike functions denoted by $k$-$\mathcal{UST}$. Such a class consists of functions $f\in\mathcal{A}$ that satisfy the inequality
$$\RE\bigg(\frac{zf'(z)}{f(z)}\bigg)> k\bigg|\frac{zf'(z)}{f(z)}-1\bigg|\ \ (z\in\de).$$
We note that $1$-$\mathcal{UST}\equiv\mathcal{UST}$. In 1997, Bharati \textit{et al.} \cite{Bharti} introduced  the class of  $k$-uniformly starlike functions of order $\gamma, (0\leq\gamma<1)$ for functions in the class $k$-$\mathcal{UST}(\gamma)$.
\begin{defn}\label{def1.1}$(\cite{Bharti})$ Let $0\leq\gamma<1$ and $k\geq0$. A function  $f\in\mathcal{A}$ is said to be in  $k$-$\mathcal{UST}(\gamma)$, called $k$-uniformly starlike functions of order $\gamma$, if $f$ satisfies the inequality
$$\RE\bigg(\frac{zf'(z)}{f(z)}\bigg)> k\bigg|\frac{zf'(z)}{f(z)}-1\bigg|+\gamma\ \ (z\in\de).$$	
\end{defn}
We note that 	$k$-$\mathcal{UST}(0)\equiv k$-$\mathcal{UST}$. The class $1$-$\mathcal{UST}(\gamma)$ was investigated  in \cite{Ali} and \cite{Ron2}.

We next  recall some concepts and notations of $q$-calculus that we need to define a new class which connects $k$-$\mathcal{UST}(\gamma)$ and a generalized integral operator defined by $q$-calculus.

Quantum calculus (or $q$-calculus) is a theory of calculus where smoothness is not required. A systematic study of $q$-differentation and $q$-integration was initiated by Jackson  \cite{Jackson1909, Jackson1909F}. The $q$-derivative (or $q$-difference) operator  of a function $f$, defined on a subset of $\ce$, is defined by
\begin{equation*}\label{eq1.0}
	(D_qf)(z)=\begin{cases}
              \frac{f(z)-f(qz)}{(1-q)z}, & \mbox{if } z\neq0 \\
              f'(0), & \mbox{if } z=0,
            \end{cases}
\end{equation*}
where $q\in(0,1)$. Note that $\lim_{q\rightarrow1^- }(D_qf)(z)=f'(z)$ if $f$ is  differentiable at $z$. Under the hypothesis of the definition of $q$-derivative, we have the following rules:
\begin{equation*}
D_{q}(af(z)\pm bg(z))= aD_{q}f(z)\pm bD_{q}g(z)\  (a,b\in\ce),
\end{equation*}
\begin{equation}\label{eq:leibniz}
D_{q}(f(z).g(z))=f(qz)D_{q}g(z)+ g(z)D_{q}f(z).
\end{equation}
It easily follows that if a  function $f$ is  given by (\ref{eq:series}), then
$$(D_qf)(z)=\sum_{n=1}^{\infty}[n]_qa_nz^{n-1},\quad\quad
D_q(zD_qf(z)) =\sum_{n=1}^{\infty}[n]_q^2 a_nz^{n-1},$$
where
\[[n]_q=\frac{1-q^n}{1-q}\]
is called $q$-number or $q$-bracket of $n$. Clearly,  $\lim_{q\rightarrow 1^{-}}[n]_q=n$. In \cite{Jackson1909F}, Jackson defined  $q$-integral of a function $f$  as follows:
$$\int_0^x f(t)d_{q} t=x(1-q) \sum_{{n=0}}^{\infty} q^{n} f(xq^{n}),$$
provided the series converges. It is known that $q$-gamma function is given by
\begin{equation}\label{eq.gamma}
\Gamma_q(x+1)=[x]_q\Gamma_q(x), \quad \Gamma_q(x+1)=[x]_q!,
\end{equation}
where  $q$-factorial $[x]_q!$ is given by
$$
[x]_{q}!= \begin{cases}
[x]_{q}[x-1]_{q}\dots[2]_{q}[1]_q, & \text{if $x\geq 1$}\\
1 , & \text{if $x=0$}.
\end{cases}
$$
Note that in the limiting case when $q\rightarrow 1^-$,  $\Gamma_q(x)\rightarrow \Gamma(x)$; see \cite{Jackson1904}.

The $q$-beta function has the $q$-integral representation, which is a $q$-analogue of Euler's formula (see \cite{Jackson1909F}):
\begin{equation}\label{eq:beta}
B_q(t,s)=\int_0^1x^{t-1}(1-qx)_q^{s-1}d_qx,\ \ (0<q<1; t,s>0).
\end{equation}
Jackson \cite{Jackson1904} also   showed that the $q-$beta function defined by the  formula
\begin{equation}\label{eq:beta2}
B_q(t,s)=\frac{\Gamma_q(t)\Gamma_q(s)}{\Gamma_q(t+s)},
\end{equation}
 tends to $B(t,s)$ as $q\rightarrow1^-.$ Also, the Gauss $q$-binomial coefficients are given by
\begin{equation}\label{eq:binom}
\binom{n}{k}_q:=\frac{[n]_q!}{[k]_q![n-k]!}.
\end{equation}
For more details, one may refer to \cite{Gasper2004, Kac2002}.

In recent years, quantum calculus approach has led to a great development in geometric function theory. Ahuja and \c{C}etinkaya \cite{ahu} recently wrote a survey on the use of quantum calculus approach in mathematical sciences and its role in geometric function theory. One may also refer to the recent paper by Ahuja \textit{et al.} \cite{anand}.

Motivated by Jung \textit{et al.}  \cite{Ki},  Mahmood \textit{et al.} \cite{Mah} introduced the generalized $q$-integral operator $\mathcal{\chi}^{\alpha}_{\beta,q}f:\mathcal{A}\rightarrow\mathcal{A}$ defined by
\begin{equation}\label{eq:intopX}
\mathcal{\chi}^{\alpha}_{\beta,q} f(z)=\binom{\alpha+\beta}{\beta}_q\frac{[\alpha]_q}{z^\beta}\int_0^z \bigg(1-\frac{qt}{z}\bigg)_q^{\alpha-1}t^{\beta-1}f(t)d_qt,
\end{equation}
where $\alpha>0, \beta>-1, q\in(0,1)$. Using (\ref{eq.gamma}), (\ref{eq:beta}), (\ref{eq:beta2}), and (\ref{eq:binom}), they observed that
\begin{equation}\label{eq:powerseries}
\mathcal{\chi}^{\alpha}_{\beta,q} f(z)=z+\sum_{n=2}^\infty\frac{\Gamma_q(\beta+n)}{\Gamma_q(\alpha+\beta+n)}\frac{\Gamma_q(\alpha+\beta+1)}{\Gamma_q(\beta+1)}a_{n}z^{n}.
\end{equation}
For special values of the parameters, the generalized integral operator \eqref{eq:powerseries} gives the  following known integral operators as special cases:
\begin{itemize}
	\item[(i)] For $q\rightarrow 1^-$, the operator $\mathcal{\chi}^{\alpha}_{\beta,q}f$ reduces to  the integral operator $\mathcal{\chi}^{\alpha}_{\beta}f$ defined in \cite{Ki} by
	{\footnotesize \begin{equation}
     \mathcal{\chi}^{\alpha}_{\beta}f(z)=\binom{\alpha+\beta}{\beta}\frac{\alpha}{z^\beta}\int_0^z \bigg(1-\frac{t}{z}\bigg)^{\alpha-1}t^{\beta-1}f(t)dt =z+\sum_{n=2}^\infty\frac{\Gamma(\beta+n)}{\Gamma(\alpha+\beta+n)}\frac{\Gamma(\alpha+\beta+1)}{\Gamma(\beta+1)}a_{n}z^{n}.
	\end{equation}}
	\item[(ii)] For $\alpha=1$, the operator $\mathcal{\chi}^{\alpha}_{\beta,q}f$ yields $q$-Bernardi integral operator $J_{\beta,q}f$ defined in \cite{Noor} by 
	\begin{equation}
	J_{\beta,q}f=\frac{[1+\beta]_q}{z^\beta}\int_0^zt^{\beta-1}f(t)d_qt=\sum_{n=1}^\infty\frac{[1+\beta]_q}{[n+\beta]_q}a_{n}z^{n}.
	\end{equation}
	\item[(iii)] For  $\alpha=1, q\rightarrow 1^-$, the operator $\mathcal{\chi}^{\alpha}_{\beta,q}f$ gives Bernardi integral operator $J_{\beta}f$  defined  in \cite{Ber} by
	\begin{equation}\label{eq 1}
    J_\beta f(z)=\frac{1+\beta}{z^\beta}\int_0^zt^{\beta-1}f(t)dt=\sum_{n=1}^\infty\frac{1+\beta}{n+\beta}a_{n}z^{n}.
    \end{equation}
   \item[(iv)] For  $\alpha=1, \beta=0, q\rightarrow 1^-$, the operator $\mathcal{\chi}^{\alpha}_{\beta,q}f$ reduces to the Alexander integral operator $J_{0}f$ given in \cite{Darus} by 
   \begin{equation}\label{eq 2}
   J_0f(z)=\int_0^z\frac{f(t)}{t}dt=z+\sum_{n=2}^\infty\frac{1}{n}a_{n}z^{n}.
    \end{equation}
\end{itemize}

Making use of  $q$-integral operator,  $\mathcal{\chi}^{\alpha}_{\beta,q}f$ and $k$-uniformly starlike function of order $\gamma$ given in Definition \ref{def1.1}, we define the the new class $k$-$\mathcal{JUST}(q;\alpha,\beta,\gamma)$.
\begin{defn}\label{def1.2} Let \ $0\leq\gamma<1,$ $q\in(0,1)$, $k\geq 0$, $\alpha>0, \beta>-1$. A function $f\in\mathcal{A}$ is in the class $k$-$\mathcal{JUST}(q;\alpha,\beta,\gamma)$  if and only if
	\begin{equation}\label{eq:def1}
	\RE\bigg(\frac{zD_q(\mathcal{\chi}^{\alpha}_{\beta,q}f(z))}{\mathcal{\chi}^{\alpha}_{\beta,q}f(z)}\bigg)> k\bigg|\frac{zD_q(\mathcal{\chi}^{\alpha}_{\beta,q}f(z))}{\mathcal{\chi}^{\alpha}_{\beta,q}f(z)}-1\bigg|+\gamma\ \ (z\in\de)
	\end{equation}
where $\mathcal{\chi}^{\alpha}_{\beta,q}f(z)$ is given by (\ref{eq:powerseries}).
\end{defn}
In what follows, we shall first look at the geometric interpretation of (\ref{eq:coeff}). Since
\begin{equation}\label{eq:coeff}
\begin{split}
\frac{zD_q(\mathcal{\chi}^{\alpha}_{\beta,q}f(z))}{\mathcal{\chi}^{\alpha}_{\beta,q}f(z)}=1+([2]_q-1)\psi_{2}a_{2}z+\big[([3]_q-1)\psi_{3}a_{3}-([2]_q-1)\psi^2_{2}a^2_{2} \big]z^2+\cdots
\end{split}
\end{equation}
where
\[\psi_{n}=\frac{\Gamma_q(\beta+n)}{\Gamma_q(\alpha+\beta+n)}\frac{\Gamma_q(\alpha+\beta+1)}{\Gamma_q(\beta+1)} \quad (n\geq 2).
\]
It follows that for $z=0$, we have $$\frac{zD_q(\mathcal{\chi}^{\alpha}_{\beta,q}f(z))}{\mathcal{\chi}^{\alpha}_{\beta,q}f(z)}=1.$$
Thus $p(z)=zD_q(\mathcal{\chi}^{\alpha}_{\beta,q}f(z))/\mathcal{\chi}^{\alpha}_{\beta,q}f(z)$, where $p$ belongs to the class $\pe$. Therefore (\ref{eq:def1}) is equivalent to
$$\RE\ p(z)>k|p(z)-1|+\gamma  \ (z\in\de).$$
Thus, $p(z)$ takes the values in the conic domain $\Omega_{k,\gamma}$ defined by
$$
\Omega_{k,\gamma}=\bigg\{ u+iv:u>k\sqrt{(u-1)^2+v^2}+\gamma, \  0\leq\gamma<1, k\geq 0  \bigg\}.
$$
Note that  $1\in\Omega_{k,\gamma}$ and $\partial\Omega_{k,\gamma}$ is a curve defined by
$$\partial\Omega_{k,\gamma}=\bigg\{ u+iv: (u-\gamma)^2=k^2(u-1)^2+k^2v^2, \  0\leq\gamma<1, k\geq 0  \bigg\}.$$
Elementary computations show that $\partial\Omega_{k,\gamma}$ represents a conic section symmetric about the real axis. Hence $\Omega_{k,\gamma}$ is an elliptic domain for $k>1$, parabolic domain for $k=1$, hyperbolic domain for $0<k<1$ and a right half plane $u>\gamma$ for $k=0$.

Denote by $\pe(p_{k,\gamma})$ the family of functions $p$ such that $p\in\pe$ and $p\prec p_{k,\gamma}$ in $\de$, where  $p_{k,\gamma}$ maps $\de$ conformally onto the domain $\Omega_{k,\gamma}$.

Kanas \textit{et al.} \cite{Kanas, Kanas1}  found that the  function $p_{k,\gamma}(z)$  plays a role of extremal function of the class $\pe(p_{k,\gamma})$ and is given by
\begin{equation}\label{eq:h(z)}
p_{k,\gamma}(z)=
\begin{cases}
\frac{1+(1-2\gamma)z}{1-z},&\text{$(k=0)$}\\
\frac{1-\gamma}{1-k^2}\cos\bigg\{\frac{2}{\pi}(\cos^{-1}k) i\log\frac{1+\sqrt{z}}{1-\sqrt{z}}\bigg\}-\frac{k^2-\gamma}{1-k^2}, &\text{$(0<k<1)$}\\
1+\frac{2(1-\gamma)}{\pi^2}\bigg(\log\frac{1+\sqrt{z}}{1-\sqrt{z}}\bigg)^2, &\text{$(k=1)$}\\
\frac{1-\gamma}{k^2-1}\sin\bigg\{\frac{\pi}{2K(t)}\int_0^{u(z)/\sqrt{t}}\frac{dx}{\sqrt{1-x^2}\sqrt{1-t^2x^2}}\bigg\}+\frac{k^2-\gamma}{k^2-1}, &\text{$(k>1)$},
\end{cases}
 \\
\end{equation}
where $u(z)=(z-\sqrt{t})/(1-\sqrt{t}z), t\in(0,1)$ and $t$ is chosen such that $k=\cosh\frac{\pi K'(t)}{4K(t)}$ and $K(t)$ is Legendre's complete elliptic integral of the first kind and $K'(t)$ is complementary integral of $K(t)$. Furthermore, since $p_{k,\gamma}(\de)=\Omega_{k,\gamma}$ and $p_{k,\gamma}(\de)$ is convex univalent in $\de$ (see \cite{Kanas}), it follows that (\ref{eq:def1})  is equivalent to
\begin{equation}\label{eq:subor}
\frac{zD_q(\mathcal{\chi}^{\alpha}_{\beta,q}f(z))}{\mathcal{\chi}^{\alpha}_{\beta,q}f(z)}\prec p_{k,\gamma}(z).
\end{equation}

\begin{rem}\label{rem 1} For special values of parameters $q$, $\alpha$, $\beta$, $\gamma$ and $k$, we get the following new classes as  special cases of  Definition \ref{def1.2}; for example:
	\begin{itemize}
		\item[1.] If $q\rightarrow 1^{-}$ and $\alpha=1$, we get $k$-$\mathcal{JUST}(\beta,\gamma):=\lim(k$-$\mathcal{JUST}(q;1,\beta,\gamma))$  with Bernardi operator \eqref{eq 1}
		\item[2.] If $q\rightarrow 1^{-}$, $\alpha=1$, $\beta=0$ and $k=0$, we get  $\mathcal{JUST}(\gamma):=\lim (0$-$\mathcal{JUST}(q;1,0,\gamma))$  with Alexander operator \eqref{eq 2}
	\end{itemize}
\end{rem}

In this paper, we shall investigate the class $k$-$\mathcal{JUST}(q;\alpha,\beta,\gamma)$. In particular, we obtain $q$-sufficient coefficient condition, $q$-Fekete-Szeg\"{o} inequalities,  $q$-Bieberbach-De Branges type coefficient estimates and solve radius problem for the functions in this class. In the concluding section, we introduce another new class $k$-$\mathcal{JUCV}(q;\alpha,\beta,\gamma)$ and omit the results for this class, because analogous results can be directly translated from the coresponding results found in Section 2 for the class $k$-$\mathcal{JUST}(q;\alpha,\beta,\gamma)$.

Unless otherwise stated, we assume in the reminder of the article that  $0\leq\gamma<1,$ $q\in(0,1)$, $k\geq 0$, $\alpha>0, \beta>-1$ and $z\in\de$.
\section{Main Results}
We first obtain $q$-sufficient coefficient condition for the functions belonging to the class $k$-$\mathcal{JUST}(q;\alpha,\beta,\gamma)$.
\begin{thm}\label{thm1} If a function $f$ defined by (\ref{eq:series}) satisfies the inequality
	\begin{equation}\label{eq:2.1}
		\sum_{n=2}^\infty \big([n]_q(k+1)-(k+\gamma)\big)\frac{\Gamma_q(\beta+n)}{\Gamma_q(\alpha+\beta+n)}\frac{\Gamma_q(\alpha+\beta+1)}{\Gamma_q(\beta+1)}|a_{n}|\leq 1-\gamma,
	\end{equation}	
	then $f$ belongs to  $k$-$\mathcal{JUST}(q;\alpha,\beta,\gamma)$. The result is sharp.	
\end{thm}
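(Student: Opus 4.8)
The plan is to show that the coefficient condition \eqref{eq:2.1} forces the quantity
$p(z) = zD_q(\chi^\alpha_{\beta,q}f(z))/\chi^\alpha_{\beta,q}f(z)$ to lie in the conic domain $\Omega_{k,\gamma}$ for every $z\in\de$. By Definition \ref{def1.1} and the discussion following Definition \ref{def1.2}, it suffices to verify the inequality
\[
k\left|\frac{zD_q(\chi^\alpha_{\beta,q}f(z))}{\chi^\alpha_{\beta,q}f(z)}-1\right| - \RE\left(\frac{zD_q(\chi^\alpha_{\beta,q}f(z))}{\chi^\alpha_{\beta,q}f(z)}-1\right) < 1-\gamma,
\]
since this is exactly $\RE\,p(z) > k|p(z)-1|+\gamma$ rearranged. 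A convenient way to handle both the modulus and the real part at once is to use the bound $k|w| - \RE(w) \le (k+1)|w|$ and then estimate $|w|$, where $w = p(z)-1$.

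First I would compute $w = p(z)-1$ explicitly. Writing $\chi^\alpha_{\beta,q}f(z) = z + \sum_{n=2}^\infty \psi_n a_n z^n$ with $\psi_n$ as in the paper, we have $zD_q(\chi^\alpha_{\beta,q}f(z)) = z + \sum_{n=2}^\infty [n]_q\psi_n a_n z^n$, so
\[
\frac{zD_q(\chi^\alpha_{\beta,q}f(z))}{\chi^\alpha_{\beta,q}f(z)} - 1 = \frac{\sum_{n=2}^\infty ([n]_q-1)\psi_n a_n z^n}{z + \sum_{n=2}^\infty \psi_n a_n z^n}.
\]
Then for $|z|<1$ the triangle inequality gives
\[
\left|\frac{zD_q(\chi^\alpha_{\beta,q}f(z))}{\chi^\alpha_{\beta,q}f(z)} - 1\right| \le \frac{\sum_{n=2}^\infty ([n]_q-1)\psi_n |a_n|}{1 - \sum_{n=2}^\infty \psi_n |a_n|},
\]
the denominator being positive because \eqref{eq:2.1} certainly implies $\sum \psi_n|a_n|<1$ (each coefficient $[n]_q(k+1)-(k+\gamma)$ exceeds $1$ for $n\ge2$). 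Multiplying the target inequality $(k+1)|w| < 1-\gamma$ through by the positive denominator and collecting terms, the claim reduces to
\[
\sum_{n=2}^\infty \big((k+1)([n]_q-1) + (1-\gamma)\big)\psi_n|a_n| \le 1-\gamma,
\]
and since $(k+1)([n]_q-1)+(1-\gamma) = [n]_q(k+1)-(k+\gamma)$, this is precisely hypothesis \eqref{eq:2.1}. Care is needed that the non-strict inequality in \eqref{eq:2.1} yields the strict inequality defining the class; this is fine because for $z\in\de$ the bound on $|w|$ is strict unless $f$ is linear, and one can instead argue that the image of $\de$ lies in the closed disc of radius $(1-\gamma)/(k+1)$ about $1$, which sits inside $\Omega_{k,\gamma}$ except for the boundary point attained only in the limit $|z|\to1$.

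For sharpness, I would exhibit the extremal function
\[
f(z) = z + \frac{1-\gamma}{\big([n]_q(k+1)-(k+\gamma)\big)\psi_n}\,z^n
\]
for a fixed $n\ge 2$ (or more simply $n=2$), which turns \eqref{eq:2.1} into an equality, and check that for this $f$ the function $p(z)$ actually reaches the boundary $\partial\Omega_{k,\gamma}$ at an appropriate boundary point $z\to 1$ (or $z = -1$ when $n=2$), so the coefficient bound cannot be relaxed. The main obstacle I anticipate is purely bookkeeping: correctly tracking the $q$-brackets through the Leibniz-type quotient and making sure the reduction from the $k|w|-\RE(w)$ form to the clean series inequality does not lose a factor; the analytic content is just the triangle inequality plus positivity of the denominator.
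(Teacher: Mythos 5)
Your proof follows the paper's argument essentially verbatim: the same reduction $k|w|-\RE(w)\le (k+1)|w|$ for $w=p(z)-1$, the same triangle-inequality estimate on $|w|$ with the positive denominator $1-\sum_{n\ge2}\psi_n|a_n|$, the same algebraic identity $(k+1)([n]_q-1)+(1-\gamma)=[n]_q(k+1)-(k+\gamma)$, and the same extremal function for sharpness (you in fact treat the strict-versus-nonstrict inequality more carefully than the paper does). One small slip: the coefficient $[n]_q(k+1)-(k+\gamma)$ need \emph{not} exceed $1$ (take $q$ small, $k=0$, $\gamma$ near $1$, e.g.\ $q=0.1$, $\gamma=0.5$ gives $[2]_q(k+1)-(k+\gamma)=0.6$); the positivity of the denominator instead follows because this coefficient always exceeds $1-\gamma$ (equivalently $[n]_q>1$ for $n\ge 2$), which together with \eqref{eq:2.1} still yields $\sum_{n\ge2}\psi_n|a_n|<1$.
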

\begin{proof} To show that  $f\in k$-$\mathcal{JUST}(q;\alpha,\beta,\gamma)$, it suffices to prove that
$$
k\bigg|\frac{zD_q(\mathcal{\chi}^{\alpha}_{\beta,q}f(z))}{\mathcal{\chi}^{\alpha}_{\beta,q}f(z)}-1\bigg|-\RE\bigg(\frac{zD_q(\mathcal{\chi}^{\alpha}_{\beta,q}f(z))}{\mathcal{\chi}^{\alpha}_{\beta,q}f(z)}-1\bigg)\leq 1-\gamma.$$	
We note that
	\begin{align}\label{eq:2.2}
		\bigg|\frac{zD_q(\mathcal{\chi}^{\alpha}_{\beta,q}f(z))}{\mathcal{\chi}^{\alpha}_{\beta,q}f(z)}-1\bigg|&=\bigg|\frac{zD_q(\mathcal{\chi}^{\alpha}_{\beta,q}f(z))-\mathcal{\chi}^{\alpha}_{\beta,q}f(z)}{\mathcal{\chi}^{\alpha}_{\beta,q}f(z)}\bigg|\nonumber\\
		\nonumber
		\end{align}
		\begin{align}	
		&=\bigg|\frac{\sum_{n=2}^\infty([n]_q-1)\frac{\Gamma_q(\beta+n)}{\Gamma_q(\alpha+\beta+n)}\frac{\Gamma_q(\alpha+\beta+1)}{\Gamma_q(\beta+1)}a_{n}z^{n}}{z+\sum_{n=2}^\infty \frac{\Gamma_q(\beta+n)}{\Gamma_q(\alpha+\beta+n)}\frac{\Gamma_q(\alpha+\beta+1)}{\Gamma_q(\beta+1)}a_{n}z^{n}}\bigg|\nonumber\\
		\nonumber\\
		&\leq\frac{\sum_{n=1}^\infty\big([n]_q-1\big)\frac{\Gamma_q(\beta+n)}{\Gamma_q(\alpha+\beta+n)}\frac{\Gamma_q(\alpha+\beta+1)}{\Gamma_q(\beta+1)}|a_{n}|}{1-\sum_{n=1}^\infty\frac{\Gamma_q(\beta+n)}{\Gamma_q(\alpha+\beta+n)}\frac{\Gamma_q(\alpha+\beta+1)}{\Gamma_q(\beta+1)}|a_{n}|}.
	\end{align}
In view of (\ref{eq:2.1}), it follows that	
$$1-\sum_{n=1}^\infty\frac{\Gamma_q(\beta+n)}{\Gamma_q(\alpha+\beta+n)}\frac{\Gamma_q(\alpha+\beta+1)}{\Gamma_q(\beta+1)}|a_{n}|>0.$$	
Using  (\ref{eq:2.2}), we have	
	\begin{align}
		& k\bigg|\frac{zD_q(\mathcal{\chi}^{\alpha}_{\beta,q}f(z))}{\mathcal{\chi}^{\alpha}_{\beta,q}f(z)}-1\bigg|-\RE\bigg(\frac{zD_q(\mathcal{\chi}^{\alpha}_{\beta,q}f(z))}{\mathcal{\chi}^{\alpha}_{\beta,q}f(z)}-1\bigg)\nonumber\\
		& \ \ \leq k\bigg|\frac{zD_q(\mathcal{\chi}^{\alpha}_{\beta,q}f(z))}{\mathcal{\chi}^{\alpha}_{\beta,q}f(z)}-1\bigg|+\bigg|\frac{zD_q(\mathcal{\chi}^{\alpha}_{\beta,q}f(z))}{\mathcal{\chi}^{\alpha}_{\beta,q}f(z)}-1\bigg|\nonumber\\
		& \ \ \leq (k+1)\bigg|\frac{zD_q(\mathcal{\chi}^{\alpha}_{\beta,q}f(z))-\mathcal{\chi}^{\alpha}_{\beta,q}f(z)}{\mathcal{\chi}^{\alpha}_{\beta,q}f(z)}\bigg|\nonumber\\
		&\ \ \leq(k+1)\bigg\{\frac{\sum_{n=2}^\infty\big([n]_q-1\big)\frac{\Gamma_q(\beta+n)}{\Gamma_q(\alpha+\beta+n)}\frac{\Gamma_q(\alpha+\beta+1)}{\Gamma_q(\beta+1)}|a_{n}|}{1-\sum_{n=1}^\infty\frac{\Gamma_q(\beta+n)}{\Gamma_q(\alpha+\beta+n)}\frac{\Gamma_q(\alpha+\beta+1)}{\Gamma_q(\beta+1)}|a_{n}|}\bigg\}\nonumber\\
		&\ \ \leq 1-\gamma\nonumber
	\end{align}	
	which proves (\ref{eq:2.1}).

For sharpness, consider the function $f_n:\mathbb{D}\rightarrow\mathbb{C}$ defined by
\[f_n(z)= z-\frac{(1-\gamma)\Gamma_q(\alpha+\beta+n)}{\big([n]_q(k+1)-(k+\gamma)\big)\Gamma_q(\beta+n)}\frac{\Gamma_q(\beta+1)}{ \Gamma_q(\alpha+\beta+1)}z^n.\]	
Since
\begin{equation*}
\RE\bigg(\frac{zD_q(\mathcal{\chi}^{\alpha}_{\beta,q}f(z))}{\mathcal{\chi}^{\alpha}_{\beta,q}f(z)}\bigg)=
\RE\left(\frac{[n]_q(k+1)-(k+\gamma)-(1-\gamma)[n]_qz^{n-1}}{[n]_q(k+1)-(k+\gamma)-(1-\gamma)z^{n-1}}\right)
> \frac{k+\gamma}{k+1}
\end{equation*}
and
\begin{align*}
k\bigg|\frac{zD_q(\mathcal{\chi}^{\alpha}_{\beta,q}f(z))}{\mathcal{\chi}^{\alpha}_{\beta,q}f(z)}-1\bigg|
&=k\left|\frac{(1-\gamma)(1-[n]_q)z^{n-1}}{[n]_q(k+1)-(k+\gamma)-(1-\gamma)z^{n-1}}\right|\\
&<\frac{k(1-\gamma)}{k+1},
\end{align*}
it follows that $f_n\in k$-$\mathcal{JUST}(q;\alpha,\beta,\gamma)$. Also, it is easy to shows that the  equality holds in \eqref{eq:2.1} for the  function $f_n$. Thus the result is sharp.
\end{proof}
\begin{cor} If $f(z)=z+a_nz^n$ and
\[|a_n|\leq\frac{(1-\gamma)\Gamma_q(\alpha+\beta+n)}{\big([n]_q(k+1)-(k+\gamma)\big)\Gamma_q(\beta+n)}\frac{\Gamma_q(\beta+1)}{ \Gamma_q(\alpha+\beta+1)}, \quad (n\geq2)\] then $f\in k$-$\mathcal{JUST}(q;\alpha,\beta,\gamma)$.
\end{cor}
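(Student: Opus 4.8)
The plan is to obtain this corollary as an immediate specialization of Theorem \ref{thm1}. For $f(z)=z+a_nz^n$ the only nonzero coefficient beyond the leading term is $a_n$, so the infinite sum on the left-hand side of \eqref{eq:2.1} collapses to the single summand indexed by $n$. Thus the sufficient condition of Theorem \ref{thm1} for this particular $f$ reduces to
\[
\big([n]_q(k+1)-(k+\gamma)\big)\frac{\Gamma_q(\beta+n)}{\Gamma_q(\alpha+\beta+n)}\frac{\Gamma_q(\alpha+\beta+1)}{\Gamma_q(\beta+1)}\,|a_{n}|\leq 1-\gamma .
\]

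First I would record that the factor multiplying $|a_n|$ above is strictly positive. The $q$-gamma quotient is positive since $\Gamma_q$ is positive on the positive reals for $0<q<1$, and, because $[n]_q$ is increasing in $n$ with $[2]_q=1+q$, one has $[n]_q(k+1)-(k+\gamma)\geq (1+q)(k+1)-(k+\gamma)=(1-\gamma)+q(k+1)>0$ for all $n\geq 2$, using $\gamma<1$, $q>0$ and $k\geq 0$. Dividing the displayed inequality by this positive quantity shows that it is equivalent to
\[
|a_n|\leq\frac{(1-\gamma)\,\Gamma_q(\alpha+\beta+n)}{\big([n]_q(k+1)-(k+\gamma)\big)\,\Gamma_q(\beta+n)}\,\frac{\Gamma_q(\beta+1)}{\Gamma_q(\alpha+\beta+1)},
\]
which is precisely the hypothesis. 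Hence \eqref{eq:2.1} is satisfied by $f$, and Theorem \ref{thm1} gives $f\in k$-$\mathcal{JUST}(q;\alpha,\beta,\gamma)$.

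There is essentially no obstacle in this argument; the only point deserving a word of care is the positivity of $[n]_q(k+1)-(k+\gamma)$, which is in any case implicit already in the normalization appearing in Theorem \ref{thm1} and in its sharpness function $f_n$. An alternative presentation would be to identify $f$ (up to the sign of $a_n$) with the extremal function $f_n$ from the proof of Theorem \ref{thm1} in the boundary case, but the division argument above is cleaner and handles the whole admissible range of $|a_n|$ in one step.
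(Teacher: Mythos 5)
Your proof is correct and follows the same route the paper intends: the corollary is stated as an immediate specialization of Theorem \ref{thm1} to a function with a single nonzero coefficient $a_n$, so the sum in \eqref{eq:2.1} collapses to one term and the hypothesis is exactly that term's bound. Your added verification that $[n]_q(k+1)-(k+\gamma)>0$ is a sensible (if routine) piece of care that the paper leaves implicit.
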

Using Remark \ref{rem 1}.1 and Remark \ref{rem 1}.2, Theorem \ref{thm1} gives the following new results.
\begin{cor} If a function $f$ defined by (\ref{eq:series}) is in the class $k$-$\mathcal{JUST}(\beta,\gamma)$, then
$$	\sum_{n=2}^\infty \big(n(k+1)-(k+\gamma)\big)\frac{1+\beta}{n+\beta}|a_{n}|\leq 1-\gamma.$$
\end{cor}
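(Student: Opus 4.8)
The plan is to read this corollary off from Theorem~\ref{thm1} by specializing the parameters to $\alpha = 1$ and letting $q \to 1^{-}$, so that essentially all one has to do is simplify the coefficient multiplier
$$\psi_{n}=\frac{\Gamma_q(\beta+n)}{\Gamma_q(\alpha+\beta+n)}\,\frac{\Gamma_q(\alpha+\beta+1)}{\Gamma_q(\beta+1)}$$
under these choices and then check that the limit may be taken term by term.

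First I would put $\alpha = 1$. Writing $\Gamma_q(1+\beta+n) = \Gamma_q\big((\beta+n)+1\big)$ and $\Gamma_q(1+\beta+1) = \Gamma_q\big((\beta+1)+1\big)$ and applying the $q$-gamma functional equation $\Gamma_q(x+1) = [x]_q\Gamma_q(x)$ from \eqref{eq.gamma}, the factors $\Gamma_q(\beta+n)$ and $\Gamma_q(\beta+1)$ cancel and one obtains
$$\psi_{n}\big|_{\alpha=1}=\frac{[1+\beta]_q}{[n+\beta]_q},$$
which is precisely the multiplier appearing in the $q$-Bernardi operator $J_{\beta,q}$ recorded in item~(ii). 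Hence, for $\alpha=1$, inequality \eqref{eq:2.1} of Theorem~\ref{thm1} becomes
$$\sum_{n=2}^{\infty}\big([n]_q(k+1)-(k+\gamma)\big)\frac{[1+\beta]_q}{[n+\beta]_q}\,|a_{n}|\leq 1-\gamma .$$

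It remains to let $q\to 1^{-}$. Since $[m]_q\to m$ for every real $m$ as $q\to1^{-}$, we have $[n]_q\to n$ and $[1+\beta]_q/[n+\beta]_q\to(1+\beta)/(n+\beta)$, so each term tends to $\big(n(k+1)-(k+\gamma)\big)\frac{1+\beta}{n+\beta}|a_{n}|$; moreover, as noted in the special case $q\to1^{-}$ above and in Remark~\ref{rem 1}.1, the operator $\mathcal{\chi}^{1}_{\beta,q}$ degenerates to the Bernardi operator $J_{\beta}$ and the class $k$-$\mathcal{JUST}(q;1,\beta,\gamma)$ to $k$-$\mathcal{JUST}(\beta,\gamma)$. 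Passing this limit inside the series converts the displayed inequality into the asserted estimate, and Theorem~\ref{thm1} then yields that $f$ belongs to $k$-$\mathcal{JUST}(\beta,\gamma)$. I do not anticipate a serious obstacle here: the argument is a substitution-and-limit computation, the one point deserving a word of justification being the interchange of $\lim_{q\to1^{-}}$ with $\sum_{n\geq2}$, which is licensed by the convergence of the series defining $\mathcal{\chi}^{\alpha}_{\beta,q}f$ that is uniform in $q$ near $1$ on compact subsets of $\de$ and already implicit in the derivation of \eqref{eq:powerseries}.
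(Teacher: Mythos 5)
Your approach is exactly the paper's: the authors give no proof beyond the remark that ``Using Remark~\ref{rem 1}.1\ldots Theorem~\ref{thm1} gives the following new results,'' i.e.\ specialize $\alpha=1$ and let $q\to1^-$, and your simplification $\psi_n|_{\alpha=1}=[1+\beta]_q/[n+\beta]_q\to(1+\beta)/(n+\beta)$ is the intended computation. Two caveats, both of which your write-up inherits from (or exposes in) the paper rather than introduces. First, the corollary as printed asserts the \emph{converse} implication of Theorem~\ref{thm1}: it says membership in $k$-$\mathcal{JUST}(\beta,\gamma)$ implies the coefficient inequality, whereas the theorem (and your argument) delivers ``inequality $\Rightarrow$ membership''; the statement's direction is evidently a slip in the paper, and what you prove is the direction the theorem actually supports. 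Second, the limit-interchange step is not quite the right mechanism: the hypothesis of the corollary is the \emph{limiting} inequality, which does not imply the $q$-inequality \eqref{eq:2.1} for any fixed $q<1$, so one cannot invoke Theorem~\ref{thm1} at $q<1$ and then pass to the limit. The clean justification is simply that the proof of Theorem~\ref{thm1} runs verbatim with $D_q$ replaced by $d/dz$, $[n]_q$ by $n$, and $\psi_n$ by $(1+\beta)/(n+\beta)$; this is a cosmetic repair, not a change of approach.
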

\begin{cor} If a function $f$ defined by (\ref{eq:series}) is in the class $\mathcal{JUST}(\gamma)$, then
	$$	\sum_{n=2}^\infty \big(n-\gamma)\frac{1}{n}|a_{n}|\leq 1-\gamma.$$
\end{cor}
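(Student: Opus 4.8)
The plan is first to decode what membership in $\mathcal{JUST}(\gamma)$ actually means. By Remark \ref{rem 1}.2 this class is the limit $q\to 1^{-}$, $\alpha=1$, $\beta=0$, $k=0$ of $0$-$\mathcal{JUST}(q;1,0,\gamma)$, so the operator $\mathcal{\chi}^{\alpha}_{\beta,q}$ degenerates to the Alexander operator \eqref{eq 2}. Writing $g=J_0f$, that is $g(z)=z+\sum_{n=2}^{\infty}\frac{a_n}{n}z^n=z+\sum_{n=2}^{\infty}b_nz^n$ with $b_n=a_n/n$, the defining inequality \eqref{eq:def1} collapses (since $k=0$ and $D_q\to d/dz$) to $\RE\big(zg'(z)/g(z)\big)>\gamma$. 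Thus $f\in\mathcal{JUST}(\gamma)$ is exactly the statement that the Alexander transform $g$ lies in $\mathcal{ST}(\gamma)$, the class of starlike functions of order $\gamma$. Rewriting the asserted inequality in terms of $g$, the quantity $|a_n|/n$ equals $|b_n|$, so $\sum_{n=2}^{\infty}\frac{n-\gamma}{n}|a_n|=\sum_{n=2}^{\infty}(n-\gamma)|b_n|$, and the claim becomes $\sum_{n=2}^{\infty}(n-\gamma)|b_n|\le 1-\gamma$.

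The obstacle here is fundamental rather than computational. The inequality $\sum_{n\ge2}(n-\gamma)|b_n|\le 1-\gamma$ is the classical Silverman-type bound, which is \emph{sufficient} for $g\in\mathcal{ST}(\gamma)$ but is not \emph{necessary}. Consequently the corollary as worded asserts the necessary direction $g\in\mathcal{ST}(\gamma)\Rightarrow\sum(n-\gamma)|b_n|\le1-\gamma$, and this implication is false. For a counterexample take $\gamma=0$ and let $g$ be the Koebe function $g(z)=z/(1-z)^2=\sum_{n=1}^{\infty}nz^n$, which is starlike of order $0$; here $b_n=n$, so $a_n=nb_n=n^2$, the preimage $f(z)=z+\sum_{n\ge2}n^2z^n$ lies in $\mathcal{A}$ and satisfies $f\in\mathcal{JUST}(0)$, yet $\sum_{n\ge2}\frac{n-0}{n}|a_n|=\sum_{n\ge2}n^2=\infty>1=1-\gamma$. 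Hence no proof of the statement exactly as printed can exist, and the main step of any honest proposal is to record this refutation.

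What the surrounding text actually supports is the reverse implication. The sentence preceding the corollary reads ``Theorem \ref{thm1} gives the following new results,'' and Theorem \ref{thm1} is a \emph{sufficient} coefficient criterion, so I read the intended corollary as: if $f$ given by \eqref{eq:series} satisfies $\sum_{n=2}^{\infty}\frac{n-\gamma}{n}|a_n|\le 1-\gamma$, then $f\in\mathcal{JUST}(\gamma)$. Under this corrected reading the proof is immediate: one specializes the hypothesis \eqref{eq:2.1} of Theorem \ref{thm1} to $q\to1^{-}$, $\alpha=1$, $\beta=0$, $k=0$, where $[n]_q\to n$ and $\psi_n=\frac{\Gamma_q(\beta+n)}{\Gamma_q(\alpha+\beta+n)}\frac{\Gamma_q(\alpha+\beta+1)}{\Gamma_q(\beta+1)}$ reduces to $1/n$, so that $\big([n]_q(k+1)-(k+\gamma)\big)\psi_n$ becomes $(n-\gamma)/n$; the conclusion of Theorem \ref{thm1} then yields exactly $f\in\mathcal{JUST}(\gamma)$. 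The only genuine point to verify is this limit of $\psi_n$, which is routine from $\Gamma_q\to\Gamma$ as $q\to1^{-}$.
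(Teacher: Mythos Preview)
Your analysis is correct and, in fact, sharper than the paper's own treatment. The paper offers no separate argument for this corollary; it simply prefaces Corollaries~2.3 and~2.4 with the sentence ``Using Remark~\ref{rem 1}.1 and Remark~\ref{rem 1}.2, Theorem~\ref{thm1} gives the following new results,'' and leaves the reader to specialize the parameters. Your final paragraph carries out exactly that specialization ($q\to1^-$, $\alpha=1$, $\beta=0$, $k=0$, whence $[n]_q\to n$ and $\psi_n\to 1/n$), which is all the paper intends.

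Where you go further is in noticing that the corollary, as printed, has the implication reversed: Theorem~\ref{thm1} is a \emph{sufficiency} result, so its specialization can only yield ``if the coefficient sum is at most $1-\gamma$, then $f\in\mathcal{JUST}(\gamma)$,'' not the converse. Your Koebe counterexample ($\gamma=0$, $g(z)=z/(1-z)^2$, hence $f\in\mathcal{JUST}(0)$ with divergent coefficient sum) cleanly refutes the statement as worded. This is a genuine misprint in the paper (the same reversal occurs in Corollary~2.3), and your reading of the intended direction is the only one consistent with the surrounding text.
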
	
In order to determine $q$-Fekete-Szeg\"{o} inequalities for the  functions in the class $k$-$\mathcal{JUST}(q;\alpha,\beta,\gamma)$, we need next three lemmas.
\begin{lem}\label{lem:pk}$(\cite{Al, Kanas})$ Let $k\geq 0$ be fixed and  $p_{k,\gamma}$ defined by (\ref{eq:h(z)}). If
$$p_{k,\gamma}(z)=1+P_1z+P_2z^2+...,$$
then
	\begin{equation}\label{eq:p1(z)}
	P_{1}(z)=
	\begin{cases}
	\frac{8(1-\gamma)(\arccos k)^2}{\pi^2(1-k^2)},&\text{ $0\leq k<1$}\\
	\frac{8(1-\gamma)}{\pi^2}, &\text{ $k=1$}\\
	\frac{\pi^2(1-\gamma)}{4\sqrt{t}(1+t)K^2(t)(k^2-1)}, &\text{$k>1$},
	\end{cases}
	\\
	\end{equation}	
and
\begin{equation}\label{eq:p2(z)}
P_{2}(z)=
\begin{cases}
\frac{(A^2+2)}{3}P_1,&\text{ $0\leq k<1$}\\
\frac{2}{3}P_1, &\text{ $k=1$}\\
\frac{4K^2(t)(t^2+6t+1)-\pi^2}{24\sqrt{t}K^2(t)(1+t)}P_1, &\text{$k>1$},
\end{cases}
\\
\end{equation}	
where $A=\frac{2}{\pi}(\cos^{-1}k)$ and $t\in(0,1)$ are chosen such that $k=\cosh(\pi K'(t)/4K(t))$ and $K(t)$ is Legendre's complete elliptic integral of the first kind and $K'(t)$ is complementary integral of $K(t)$.
\end{lem}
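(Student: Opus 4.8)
The plan is to derive each of the three cases by expanding the relevant branch of $p_{k,\gamma}$ from (\ref{eq:h(z)}) as a power series in $z$ about the origin and reading off the coefficients of $z$ and $z^{2}$. A preliminary remark that simplifies the bookkeeping: a direct comparison of the four branches in (\ref{eq:h(z)}) shows that $p_{k,\gamma}(z)=(1-\gamma)\,p_{k,0}(z)+\gamma$, so $P_{j}(\gamma)=(1-\gamma)P_{j}(0)$; hence it is enough to treat $\gamma=0$, though carrying $\gamma$ throughout costs essentially nothing. In every case one should also note that although $\sqrt z$ appears in the formulas, the odd powers cancel and $p_{k,\gamma}$ is genuinely analytic at $z=0$, which serves as a built‑in consistency check (the constant term must come out $1$).

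For $0\le k<1$ I would first use $\cos(iw)=\cosh w$ to write the branch as $\frac{1-\gamma}{1-k^{2}}\cosh(AL)-\frac{k^{2}-\gamma}{1-k^{2}}$, where $A=\tfrac2\pi\arccos k$ and $L=\log\frac{1+\sqrt z}{1-\sqrt z}=2\sqrt z+\tfrac23 z^{3/2}+O(z^{5/2})$. Then $L^{2}=4z+\tfrac83 z^{2}+O(z^{3})$ and $L^{4}=16z^{2}+O(z^{3})$, and since $\cosh$ is even only $L^{2},L^{4}$ enter: $\cosh(AL)=1+\tfrac{A^{2}}{2}L^{2}+\tfrac{A^{4}}{24}L^{4}+\cdots=1+2A^{2}z+\tfrac23A^{2}(A^{2}+2)z^{2}+\cdots$. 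The constant term of the branch is $\frac{(1-\gamma)-(k^{2}-\gamma)}{1-k^{2}}=1$, and matching the coefficients of $z$ and $z^{2}$ against $1+P_{1}z+P_{2}z^{2}+\cdots$ yields $P_{1}=\frac{2A^{2}(1-\gamma)}{1-k^{2}}=\frac{8(1-\gamma)(\arccos k)^{2}}{\pi^{2}(1-k^{2})}$ and $P_{2}=\frac{A^{2}+2}{3}P_{1}$. The case $k=1$ is the limiting instance: squaring the series for $L$ in $p_{1,\gamma}(z)=1+\frac{2(1-\gamma)}{\pi^{2}}L^{2}$ gives $1+\frac{8(1-\gamma)}{\pi^{2}}z+\frac{16(1-\gamma)}{3\pi^{2}}z^{2}+\cdots$, whence the stated $P_{1}$ and $P_{2}=\tfrac23P_{1}$.

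The real work is the case $k>1$. Here I would recognise the inner integral as an inverse Jacobi elliptic sine, $\int_{0}^{u(z)/\sqrt t}\frac{dx}{\sqrt{1-x^{2}}\sqrt{1-t^{2}x^{2}}}=\operatorname{sn}^{-1}\!\bigl(u(z)/\sqrt t;t\bigr)$, and set $v(z)=u(z)/\sqrt t$ with $u(z)=\frac{z-\sqrt t}{1-\sqrt t z}$, so that $v(0)=-1$, $v'(0)=\frac{1-t}{\sqrt t}$, $v''(0)=2(1-t)$. Since $v(0)=-1$ is a turning point of $\operatorname{sn}$ (indeed $\operatorname{sn}^{-1}(-1;t)=-K(t)$), one cannot differentiate $\operatorname{sn}^{-1}$ at $z=0$ directly; instead I would put $\operatorname{sn}^{-1}(v;t)=-K(t)+s$ and expand $v=\operatorname{sn}(-K(t)+s;t)=-1+b_{2}s^{2}+b_{4}s^{4}+\cdots$, which contains only even powers of $s$ because $\operatorname{sn}$ is symmetric about its extremum. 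Feeding this ansatz into the defining relation $(\operatorname{sn}')^{2}=(1-\operatorname{sn}^{2})(1-t^{2}\operatorname{sn}^{2})$ (equivalently $\operatorname{sn}''=-(1+t^{2})\operatorname{sn}+2t^{2}\operatorname{sn}^{3}$) and matching powers of $s$ gives $b_{2}=\frac{1-t^{2}}{2}$ and $b_{4}=\frac{(1-t^{2})(5t^{2}-1)}{24}$. Inverting the series $v(z)+1=b_{2}s^{2}+b_{4}s^{4}+\cdots$ against $v(z)+1=\frac{1-t}{\sqrt t}z+(1-t)z^{2}+\cdots$ then gives $s^{2}=\mu z+c_{2}z^{2}+\cdots$ with $\mu=\frac{2}{\sqrt t(1+t)}$ and $c_{2}=\frac{t^{2}+6t+1}{3t(1+t)^{2}}$. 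Finally, writing the branch as $\frac{1-\gamma}{k^{2}-1}\sin\!\bigl(\tfrac{\pi}{2K(t)}(-K(t)+s)\bigr)+\frac{k^{2}-\gamma}{k^{2}-1}=\frac{1-\gamma}{k^{2}-1}\bigl(-\cos\tfrac{\pi s}{2K(t)}\bigr)+\frac{k^{2}-\gamma}{k^{2}-1}$ and using $-\cos\tfrac{\pi s}{2K(t)}=-1+\frac{\pi^{2}}{8K^{2}(t)}s^{2}-\frac{\pi^{4}}{384K^{4}(t)}s^{4}+\cdots$, the constant term is again $1$, and the coefficients of $z$ and $z^{2}$, after substituting $\mu$, $c_{2}$ and collecting (with $t$ tied to $k$ by $k=\cosh(\pi K'(t)/4K(t))$ and $k^{2}-1$ entering only as the overall denominator), reduce to the asserted $P_{1}$ and $P_{2}=\frac{4K^{2}(t)(t^{2}+6t+1)-\pi^{2}}{24\sqrt t\,K^{2}(t)(1+t)}\,P_{1}$.

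The main obstacle is the $k>1$ branch: one must identify the elliptic integral, treat the branch point of $\operatorname{sn}^{-1}$ at $-1$ correctly (hence the substitution $\operatorname{sn}^{-1}(v)=-K+s$ and the even expansion), and carry the inversion $s^{2}=\mu z+c_{2}z^{2}+\cdots$ accurately to fourth order in $s$ — this is precisely what produces the combination $4K^{2}(t)(t^{2}+6t+1)-\pi^{2}$ in $P_{2}$. By contrast, the cases $0\le k\le 1$ are routine Taylor expansions of elementary functions.
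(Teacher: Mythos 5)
Your derivation is correct, and it is worth noting that the paper itself offers no proof of this lemma at all: it is simply quoted from Al-Oboudi--Al-Amoudi and Kanas--Wi\'{s}niowska, so there is no ``paper's argument'' to compare against. What you have produced is a self-contained verification by direct Taylor expansion of each branch of $p_{k,\gamma}$ in \eqref{eq:h(z)}, which is essentially the computation carried out in the cited sources. All the individual steps check out: the reduction $p_{k,\gamma}=(1-\gamma)p_{k,0}+\gamma$; the expansions $L=2\sqrt z+\tfrac23z^{3/2}+\cdots$, $L^2=4z+\tfrac83z^2+\cdots$ for the cases $0\le k\le 1$; and, in the delicate case $k>1$, the identification of the inner integral as $\operatorname{sn}^{-1}(v;t)$, the recentering $\operatorname{sn}^{-1}(v)=-K(t)+s$ at the branch point $v(0)=-1$, the coefficients $b_2=\tfrac{1-t^2}{2}$, $b_4=\tfrac{(1-t^2)(5t^2-1)}{24}$ obtained from $\operatorname{sn}''=-(1+t^2)\operatorname{sn}+2t^2\operatorname{sn}^3$, and the inversion data $\mu=\tfrac{2}{\sqrt t(1+t)}$, $c_2=\tfrac{t^2+6t+1}{3t(1+t)^2}$. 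Substituting these into $-\cos\tfrac{\pi s}{2K(t)}$ does reproduce the stated $P_1$ and the combination $4K^2(t)(t^2+6t+1)-\pi^2$ in $P_2$, and the constant term coming out equal to $1$ in every branch is a good sanity check. The only point you leave implicit is the sign ambiguity $s=\pm\sqrt{\mu z+\cdots}$, which is harmless since only $s^2$ enters the even function $\cos(\pi s/2K(t))$; you might state that in one sentence. In short: the proposal is a complete and correct proof of a statement the paper takes on faith from the literature.
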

\begin{lem} \label{lem:fekete}$(\cite [Lemma\ 3, p.254] {LIB1})$
 If $p(z)=1+c_1 z+c_2z^2+c_3z^3+\cdots$ is in class $\mathcal{P}$ and $\eta$ is a complex number, then \[|c_2-\eta c_1^2|\leq 2\max\{1,|2\eta-1|\}.\] The result is sharp for the functions  $p(z)=(1+z^2)/(1-z^2)$ and $p(z)=(1+z)/(1-z).$
\end{lem}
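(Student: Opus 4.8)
The plan is to prove this as the classical Fekete--Szeg\"o inequality for the Carath\'eodory class, working directly from the Schwarz-function representation of $p$. Since replacing $p(z)$ by $p(e^{-i\theta}z)$ sends $c_1\mapsto e^{-i\theta}c_1$ and $c_2\mapsto e^{-2i\theta}c_2$, the quantity $|c_2-\eta c_1^2|$ is invariant under such rotations, so one could assume $c_1\geq 0$ if convenient; in fact the argument below does not even need this reduction.

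First I would write $p(z)=\bigl(1+\phi(z)\bigr)/\bigl(1-\phi(z)\bigr)$, where $\phi(z)=b_1z+b_2z^2+\cdots$ is a Schwarz function in $\Omega$. The Schwarz lemma applied to $\phi(z)/z$, together with a disc-automorphism composition, yields the standard coefficient estimates $|b_1|\leq 1$ and $|b_2|\leq 1-|b_1|^2$, the latter holding with equality ($b_2=0$) in the degenerate case $|b_1|=1$, where $\phi$ is a rotation. Expanding $p(z)=1+2\phi(z)+2\phi(z)^2+\cdots$ and comparing coefficients gives $c_1=2b_1$ and $c_2=2b_2+2b_1^2$, hence
\[
c_2-\eta c_1^2 = 2b_2 + (2-4\eta)\,b_1^2 .
\]

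Next I would apply the triangle inequality together with the bound on $b_2$:
\[
|c_2-\eta c_1^2| \leq 2|b_2| + |2-4\eta|\,|b_1|^2 \leq 2\bigl(1-|b_1|^2\bigr) + |2-4\eta|\,|b_1|^2 = 2 + \bigl(|2-4\eta|-2\bigr)|b_1|^2 .
\]
This is an affine function of $s:=|b_1|^2\in[0,1]$, so its maximum over $s$ occurs at an endpoint. If $|2\eta-1|\leq 1$, then $|2-4\eta|=2|2\eta-1|\leq 2$, the coefficient of $s$ is nonpositive, the maximum is $2$ at $s=0$, and $2=2\max\{1,|2\eta-1|\}$. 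If $|2\eta-1|>1$, the coefficient of $s$ is positive, the maximum is $|2-4\eta|=2|2\eta-1|$ at $s=1$, which again equals $2\max\{1,|2\eta-1|\}$. Combining the two cases gives the claimed inequality.

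Finally, for sharpness I would check the two stated extremal functions. For $p(z)=(1+z^2)/(1-z^2)$ one has $\phi(z)=z^2$, so $c_1=0$, $c_2=2$, and $|c_2-\eta c_1^2|=2$, which meets the bound when $\max\{1,|2\eta-1|\}=1$. For $p(z)=(1+z)/(1-z)$ one has $\phi(z)=z$, so $c_1=2$, $c_2=2$, and $|c_2-\eta c_1^2|=|2-4\eta|=2|2\eta-1|$, meeting the bound when $\max\{1,|2\eta-1|\}=|2\eta-1|$. The only genuinely non-formal ingredient is the Schwarz-function coefficient inequality $|b_2|\leq 1-|b_1|^2$; everything else is bookkeeping plus an endpoint analysis of a linear function, so I expect no real obstacle beyond stating that estimate carefully, including the $|b_1|=1$ boundary case.
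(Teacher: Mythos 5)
Your proof is correct. Note, however, that the paper does not actually prove this statement: it is quoted verbatim as Lemma~\ref{lem:fekete} from Libera and Z\l otkiewicz \cite{LIB1} and used as a black box, so there is no in-paper argument to compare against. Your derivation is the standard one for this inequality: writing $p=(1+\phi)/(1-\phi)$ with $\phi\in\Omega$, extracting $c_1=2b_1$, $c_2=2b_2+2b_1^2$, invoking the Schwarz--Pick estimate $|b_2|\leq 1-|b_1|^2$ (correctly including the degenerate case $|b_1|=1$, where $b_2=0$), and then maximizing the resulting affine function of $|b_1|^2$ over the endpoints of $[0,1]$. The case split $|2\eta-1|\leq 1$ versus $|2\eta-1|>1$ matches exactly the two extremal functions named in the statement, and your sharpness check confirms that each branch of the bound is attained. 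The only substantive input is the coefficient inequality for Schwarz functions, which you state and justify properly, so the argument is complete.
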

\begin{lem}\label{lemma3}$(\cite[Lemma \ 1,  p.162]{Ma})$
	If $p(z)=1+c_1 z+c_2z^2+c_3z^3+\cdots$ is in class $\mathcal{P}$ and $\eta$ is a real number, then
	\begin{equation*}
		|c_2-\eta c_1^2| \leq\left\{
		\begin{array}{cc}
			-4\eta+2 &\text{if}\quad \eta\leq 0,\\
			2 &\quad\text{if}\quad 0\leq \eta\leq 1,\\
			4\eta-2 & \text{if}\quad \eta\geq 1.
		\end{array}
		\right.\\
	\end{equation*}
	When $\eta<0$ and $\eta>1$, equality holds if and only if $p(z)=(1+z)/(1-z)$ or one of its rotations. If $0<\eta<1$, then equality holds if and only if $p(z)=(1+z^2)/(1-z^2)$ or one of its rotations. If $\eta=0$, equality holds if and only if \[p(z)=\frac{1+\lambda}{2}\left(\frac{1+z}{1-z}\right)+\frac{1-\lambda}{2}\left(\frac{1-z}{1+z}\right),\quad 0\leq \lambda\leq 1\] or one of its rotations. If $\eta=1$, equality holds if and only if $p(z)$ is the reciprocal of one of the functions such that the equality holds in the case $\eta=0$.
\end{lem}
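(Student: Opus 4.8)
The inequality itself is a quick consequence of Lemma~\ref{lem:fekete}. Indeed, for real $\eta$ one has
$$2\max\{1,|2\eta-1|\}=
\begin{cases}
2-4\eta,&\eta\le 0,\\
2,&0\le\eta\le 1,\\
4\eta-2,&\eta\ge 1,
\end{cases}$$
so the stated bound follows at once. The real content of the lemma is therefore the precise description of the extremal functions, and for that the plan is to redo the estimate starting from the Carath\'eodory representation of $\pe$.

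Specifically, I would use that any $p(z)=1+c_1z+c_2z^2+\cdots$ in $\pe$ satisfies $|c_1|\le 2$ together with the standard Carath\'eodory-coefficient identity $2c_2=c_1^2+(4-c_1^2)x$ for some $x$ with $|x|\le 1$; both facts come from $p(z)=\int_{|\zeta|=1}\frac{1+\zeta z}{1-\zeta z}\,d\mu(\zeta)$ with $\mu$ a probability measure, so that $c_n=2\int\zeta^n\,d\mu$. Since replacing $p(z)$ by $p(e^{i\theta}z)$ sends $c_n$ to $e^{in\theta}c_n$ and hence leaves $|c_2-\eta c_1^2|$ unchanged when $\eta$ is real, I may assume $c_1=c\in[0,2]$, so that $4-c^2\ge 0$. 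Then
$$c_2-\eta c_1^2=\Big(\tfrac12-\eta\Big)c^2+\tfrac{x}{2}(4-c^2),$$
whence
$$|c_2-\eta c_1^2|\le\Big|\tfrac12-\eta\Big|c^2+\tfrac12(4-c^2)=2+\Big(\big|\tfrac12-\eta\big|-\tfrac12\Big)c^2,$$
which is maximized over $c\in[0,2]$ at $c=0$ when $0\le\eta\le 1$ and at $c=2$ otherwise, recovering the bound.

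For the equality statements I would trace back through the two inequalities just used. Equality in the triangle inequality forces $|x|=1$ with $(\tfrac12-\eta)c^2$ and $x(4-c^2)$ aligned, while equality in the maximization over $c$ forces $c=2$ when $\eta<0$ or $\eta>1$, and $c=0$ when $0<\eta<1$. In the former case $|c_1|=2$ makes $\mu$ a point mass, giving $p(z)=(1+z)/(1-z)$ and its rotations; in the latter $c_1=0$ and $|c_2|=2$ give $p(z)=(1+z^2)/(1-z^2)$ and its rotations. At $\eta=0$ and $\eta=1$ the coefficient of $c^2$ above vanishes, so the bound $2$ is attained on a whole one-parameter family: solving $2c_2=c_1^2+(4-c_1^2)x$ with $x=1$ (resp.\ $x=-1$) and $c_1\in[0,2]$ and passing to partial fractions identifies this family, up to rotation, as $\frac{1+\lambda}{2}\frac{1+z}{1-z}+\frac{1-\lambda}{2}\frac{1-z}{1+z}$, $\lambda\in[0,1]$ (resp.\ as the reciprocals of these functions).

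The main difficulty is exactly this endpoint analysis: unlike the generic case, where extremality pins down a single function modulo rotation, at $\eta=0$ and $\eta=1$ one must show both that every member of the displayed family is extremal and that there are no others, which requires keeping careful track of which representing measures $\mu$ saturate the two inequalities simultaneously.
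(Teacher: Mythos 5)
The paper does not prove this lemma at all --- it is quoted from Ma and Minda --- so the only question is whether your blind argument is sound. In outline it is the standard (Ma--Minda) proof and it is essentially correct: the displayed bound really is just Lemma~\ref{lem:fekete} specialized to real $\eta$ (your case analysis of $2\max\{1,|2\eta-1|\}$ is right), and the rotation-plus-Carath\'eodory computation with $2c_2=c_1^2+(4-c_1^2)x$, $|x|\le 1$, correctly recovers the bound and pins down the extremals in the generic ranges: for $\eta<0$ or $\eta>1$ equality forces $|c_1|=2$, hence a point-mass representing measure and $p(z)=(1+\zeta_0 z)/(1-\zeta_0 z)$; for $0<\eta<1$ it forces $c_1=0$ and $|c_2|=2$, hence $\mu=\frac12(\delta_{\zeta_0}+\delta_{-\zeta_0})$ and a rotation of $(1+z^2)/(1-z^2)$. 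The one step you assert rather than establish is the ``only if'' direction at the endpoints $\eta=0$ and $\eta=1$: to conclude that the extremals are exactly the displayed $\lambda$-family you need the fact that $|x|=1$ together with $|c_1|<2$ forces the representing measure to be supported on at most two points (this is the equality case of the Cauchy--Schwarz, equivalently the vanishing of the Toeplitz determinant, underlying the identity for $c_2$); granting that, the two-point measures with $x=1$ are precisely the convex combinations $\frac{1+\lambda}{2}\frac{1+z}{1-z}+\frac{1-\lambda}{2}\frac{1-z}{1+z}$ up to rotation, as you claim. For $\eta=1$ it is cleaner to bypass the $x=-1$ bookkeeping entirely and use the involution $p\mapsto 1/p$ of $\pe$, which sends $1+c_1z+c_2z^2+\cdots$ to $1-c_1z+(c_1^2-c_2)z^2+\cdots$ and therefore carries the functional $|c_2-c_1^2|$ to $|c_2|$; this reduces the case $\eta=1$ to the case $\eta=0$ and explains at once why its extremals are the reciprocals of the $\eta=0$ extremals.
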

\begin{thm}\label{thm2} Let  $k\geq 0$ and $f\in k$-$\mathcal{JUST}(q;\alpha,\beta,\gamma)$, where $f$ is of the form (\ref{eq:series}). Then, for a complex number $\eta$, $q$-Fekete-Szeg\"{o} inequality is given by
$$	
|a_{3}-\eta a_{2}^2|\leq\frac{P_1}{2([3]_q-1)\psi_{3}}\max\{1,|2\nu-1|\},
$$
where
\begin{equation}\label{eq:nu}
\nu=\frac{1}{2}-\frac{P_2}{2P_1}-\frac{P_1}{2([2]_q-1)}+\eta\frac{P_1([3]_q-1)\psi_{3}}{2([2]_q-1)^2\psi^2_{2}},
\end{equation}
\[\psi_{n}=\frac{\Gamma_q(\beta+n)}{\Gamma_q(\alpha+\beta+n)}\frac{\Gamma_q(\alpha+\beta+1)}{\Gamma_q(\beta+1)}, \quad (n=2,3),
\]
 and where $P_1$ and $P_2$ are given by Lemma \ref{lem:pk}.
\end{thm}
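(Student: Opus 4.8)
The plan is to combine the subordination characterisation \eqref{eq:subor} of $k$-$\mathcal{JUST}(q;\alpha,\beta,\gamma)$ with the power series expansion \eqref{eq:coeff}, and then to reduce the estimate to the classical Fekete--Szeg\"{o} functional for Carath\'{e}odory functions furnished by Lemma \ref{lem:fekete}. Since $f\in k$-$\mathcal{JUST}(q;\alpha,\beta,\gamma)$, by \eqref{eq:subor} there is a Schwarz function $w$ with $w(0)=0$ such that $zD_q(\mathcal{\chi}^{\alpha}_{\beta,q}f(z))/\mathcal{\chi}^{\alpha}_{\beta,q}f(z)=p_{k,\gamma}(w(z))$. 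Because Lemma \ref{lem:fekete} is stated for functions in the class $\mathcal{P}$, I would first pass to the associated Carath\'{e}odory function $h(z)=(1+w(z))/(1-w(z))=1+c_1z+c_2z^2+\cdots$, which belongs to $\mathcal{P}$, and invert this to obtain $w(z)=\tfrac{c_1}{2}z+\tfrac12\bigl(c_2-\tfrac{c_1^2}{2}\bigr)z^2+\cdots$. Writing $p_{k,\gamma}(z)=1+P_1z+P_2z^2+\cdots$ with $P_1,P_2$ as in Lemma \ref{lem:pk}, substitution then gives the first two nontrivial coefficients of $p_{k,\gamma}(w(z))$ as $\tfrac{P_1c_1}{2}$ and $\tfrac{P_1}{2}c_2-\tfrac{P_1}{4}c_1^2+\tfrac{P_2}{4}c_1^2$.

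Next I would equate these with the coefficients of $z$ and $z^2$ in \eqref{eq:coeff}, namely $([2]_q-1)\psi_2a_2$ and $([3]_q-1)\psi_3a_3-([2]_q-1)\psi_2^2a_2^2$. The first comparison yields $a_2=P_1c_1/\bigl(2([2]_q-1)\psi_2\bigr)$; inserting this into the second and solving for $a_3$ expresses $a_3$ as $\tfrac{P_1}{2([3]_q-1)\psi_3}$ times $c_2$ minus a multiple of $c_1^2$. Forming $a_3-\eta a_2^2$ and combining the two $c_1^2$-contributions — one coming from $a_3$ and one from $\eta a_2^2$, where $a_2^2=P_1^2c_1^2/\bigl(4([2]_q-1)^2\psi_2^2\bigr)$ — the functional factors as
\[
a_3-\eta a_2^2=\frac{P_1}{2([3]_q-1)\psi_3}\bigl(c_2-\nu c_1^2\bigr),
\]
with $\nu$ exactly the quantity displayed in \eqref{eq:nu}. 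Establishing this identity is the technical core of the proof.

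Finally, since $\eta$, and hence $\nu$, is an arbitrary complex number, Lemma \ref{lem:fekete} applied with this $\nu$ bounds $|c_2-\nu c_1^2|$ by $2\max\{1,|2\nu-1|\}$, and the asserted $q$-Fekete--Szeg\"{o} estimate for $|a_3-\eta a_2^2|$ follows immediately. I expect the only real obstacle to be the bookkeeping: carrying the nested $q$-Gamma quotients $\psi_2,\psi_3$ and the $q$-brackets $[2]_q,[3]_q$ correctly through the two coefficient comparisons, and verifying that the $c_1^2$-coefficient collapses to precisely the $\nu$ of \eqref{eq:nu}. There is no analytic difficulty beyond this, since the convexity and univalence of $p_{k,\gamma}$ — already invoked to obtain \eqref{eq:subor} — legitimise both the subordination step and the passage to $h\in\mathcal{P}$.
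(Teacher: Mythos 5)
Your proposal is correct and follows essentially the same route as the paper's own proof: pass from the subordination \eqref{eq:subor} to the Carath\'{e}odory function $(1+w)/(1-w)$, expand $p_{k,\gamma}(w(z))$, compare coefficients with \eqref{eq:coeff} to get $a_2$ and $a_3$, collapse $a_3-\eta a_2^2$ to $\tfrac{P_1}{2([3]_q-1)\psi_3}(c_2-\nu c_1^2)$ with $\nu$ as in \eqref{eq:nu}, and invoke Lemma \ref{lem:fekete}. The coefficient bookkeeping you flag as the only technical point does work out exactly as you predict.
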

\begin{proof} If $f\in k$-$\mathcal{JUST}(q;\alpha,\beta,\gamma)$, then there is a Schwarz function $w$, analytic in $\de$ with $w(0)=0$ and $|w(z)|< 1$  such that
$$\frac{zD_q(\mathcal{\chi}^{\alpha}_{\beta,q}f(z))}{\mathcal{\chi}^{\alpha}_{\beta,q}f(z)}= p_{k,\gamma}(w(z)).$$		
Define the function $p$ by
$$p(z)=\frac{1+w(z)}{1-w(z)}=1+c_1z+c_2z^2+...\quad(z\in\de).$$
Since  $p\in\pe$ is a function with $p(0)=1$ and $\RE(p(z))>0$, we get
\begin{align}\label{eq:h,k,p}
	p_{k,\gamma}(w(z))& =p_{k,\gamma}\bigg(\frac{p(z)-1}{p(z)+1}\bigg) \nonumber \\
	&=p_{k,\gamma}\bigg(\frac{c_1}{2}z+\frac{1}{2}\bigg(c_2-\frac{c_1^2}{2}\bigg)z^2+\frac{1}{2}\bigg(c_3-c_1c_2+\frac{c_1^3}{4}\bigg)z^3+...\bigg) \nonumber \\
	&=1+\frac{P_1c_1}{2}z+\bigg(\frac{P_1}{2}\bigg(c_2-\frac{c_1^2}{2}\bigg)+\frac{P_2c_1^2}{4}\bigg)z^2+...
\end{align}
Comparing the coefficients of (\ref{eq:h,k,p}) and (\ref{eq:coeff}), we get
$$a_{2}=\frac{P_1c_1}{2([2]_q-1)\psi_{2}}$$
and
$$a_{3}=\frac{P_1}{2([3]_q-1)\psi_{3}}\bigg\{c_2-\frac{c^2_1}{2}+\frac{c^2_1}{2}\bigg(\frac{P_2}{P_1}+\frac{P_1}{[2]_q-1}\bigg)\bigg\}.$$
For any complex number $\eta$, we have
{\small
\begin{align}\label{eq:fekete}
a_{3}-\eta a_{2}^2&=\frac{P_1}{2([3]_q-1)\psi_{3}}\bigg\{c_2-\frac{c^2_1}{2}+\frac{c^2_1}{2}\bigg(\frac{P_2}{P_1}+\frac{P_1}{[2]_q-1}\bigg)\bigg\}- \eta\frac{P_1^2c_1^2}{4([2]_q-1)^2\psi^2_{2}}.
\end{align}}
Equation (\ref{eq:fekete}) can be written as:
\begin{equation}\label{eq:a3nua2}
a_{3}-\eta a_{2}^2=\frac{P_1}{2([3]_q-1)\psi_{3}}\{c_2-\nu c_1^2\},
\end{equation}
where $\nu$ is defined by (\ref{eq:nu}). Applying  Lemma \ref{lem:fekete},  proof is completed. The result is sharp for a function $f$ given by
$$\frac{zD_q(\mathcal{\chi}^{\alpha}_{\beta,q}f(z))}{\mathcal{\chi}^{\alpha}_{\beta,q}f(z)}=p_{k,\gamma}(z)\quad \text{or}\quad \frac{zD_q(\mathcal{\chi}^{\alpha}_{\beta,q}f(z))}{\mathcal{\chi}^{\alpha}_{\beta,q}f(z)}=p_{k,\gamma}(z^2). \qedhere$$
\end{proof}
 In view of Remark \ref{rem 1}.1 and Remark \ref{rem 1}.2, we get the following new results as special cases of Theorem \ref{thm2}.
\begin{cor} If a function $f$ defined by (\ref{eq:series}) is in the class $k$-$\mathcal{JUST}(\beta,\gamma)$, then
$$|a_{3}-\eta a_{2}^2|\leq\frac{P_1(3+\beta)}{4(1+\beta)}\max\bigg\{1,\bigg|-\frac{P_2}{P_1}-P_1+\eta\frac{2P_1(2+\beta)^2}{(3+\beta)(1+\beta)}\bigg|\bigg\}.$$	\end{cor}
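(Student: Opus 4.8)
The plan is to specialize Theorem~\ref{thm2} to the parameter values corresponding to the class $k$-$\mathcal{JUST}(\beta,\gamma)$ identified in Remark~\ref{rem 1}.1, namely $q\to 1^-$ and $\alpha=1$. First I would record what each ingredient of Theorem~\ref{thm2} becomes under this limit: since $\lim_{q\to 1^-}[n]_q=n$, we have $[2]_q-1\to 1$ and $[3]_q-1\to 2$; and since $\psi_n=\frac{\Gamma_q(\beta+n)}{\Gamma_q(\alpha+\beta+n)}\frac{\Gamma_q(\alpha+\beta+1)}{\Gamma_q(\beta+1)}$ with $\alpha=1$ reduces, using $\Gamma_q(x+1)=[x]_q\Gamma_q(x)$ and the limit $\Gamma_q\to\Gamma$, to $\psi_n\to \frac{\Gamma(\beta+n)}{\Gamma(\beta+1+n)}\frac{\Gamma(\beta+2)}{\Gamma(\beta+1)}=\frac{1+\beta}{n+\beta}$. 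In particular $\psi_2\to\frac{1+\beta}{2+\beta}$ and $\psi_3\to\frac{1+\beta}{3+\beta}$. The quantities $P_1,P_2$ are unchanged, as they depend only on $k$ and $\gamma$ through Lemma~\ref{lem:pk}.

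Next I would substitute these limiting values into the two displayed formulas of Theorem~\ref{thm2}. The prefactor becomes
\[
\frac{P_1}{2([3]_q-1)\psi_3}\;\longrightarrow\;\frac{P_1}{2\cdot 2\cdot\frac{1+\beta}{3+\beta}}=\frac{P_1(3+\beta)}{4(1+\beta)},
\]
which matches the claimed bound's prefactor. For the argument $\nu$ of the maximum, I would plug $[2]_q-1\to 1$, $[3]_q-1\to 2$, $\psi_2\to\frac{1+\beta}{2+\beta}$, $\psi_3\to\frac{1+\beta}{3+\beta}$ into \eqref{eq:nu}:
\[
\nu\;\longrightarrow\;\frac12-\frac{P_2}{2P_1}-\frac{P_1}{2}+\eta\,\frac{P_1\cdot 2\cdot\frac{1+\beta}{3+\beta}}{2\cdot 1\cdot\frac{(1+\beta)^2}{(2+\beta)^2}}
=\frac12-\frac{P_2}{2P_1}-\frac{P_1}{2}+\eta\,\frac{P_1(2+\beta)^2}{(3+\beta)(1+\beta)}.
\]
Then $2\nu-1=-\frac{P_2}{P_1}-P_1+\eta\,\frac{2P_1(2+\beta)^2}{(3+\beta)(1+\beta)}$, so $\max\{1,|2\nu-1|\}$ is exactly the expression appearing in the corollary. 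Combining the prefactor with this maximum yields the stated inequality.

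There is essentially no obstacle here beyond bookkeeping: the corollary is a direct substitution into an already-proved theorem, so the only thing to be careful about is the interchange of the limit $q\to 1^-$ with membership in the class and with the inequality — but this is precisely the content of Remark~\ref{rem 1}.1, which defines $k$-$\mathcal{JUST}(\beta,\gamma)$ as that limiting class, so the Fekete--Szeg\"o estimate passes to the limit termwise. I would also double-check the algebraic simplification of $\psi_n$ at $\alpha=1$, since that is the one place a factor could be dropped; everything else is immediate. Sharpness, if one wishes to state it, transfers from Theorem~\ref{thm2} via the extremal functions $f$ with $zf'(z)/f(z)=p_{k,\gamma}(z)$ or $p_{k,\gamma}(z^2)$ composed with the ($\alpha=1$, $q\to1^-$) Bernardi operator relation, but the corollary as stated only claims the bound.
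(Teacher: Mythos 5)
Your proof is correct and is precisely the route the paper intends: the corollary is stated as a direct special case of Theorem~\ref{thm2} via Remark~\ref{rem 1}.1 ($q\to 1^-$, $\alpha=1$), and your substitutions $[2]_q-1\to 1$, $[3]_q-1\to 2$, $\psi_n\to(1+\beta)/(n+\beta)$ reproduce the stated bound exactly. The paper gives no further detail, so your bookkeeping is if anything more complete than the source.
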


\begin{cor} If a function $f$ defined by (\ref{eq:series}) is in the class $\mathcal{JUST}(\gamma)$, then
		$$|a_{3}-\eta a_{2}^2|\leq\frac{3P_1}{4}\max\bigg\{1,\bigg|-\frac{P_2}{P_1}-P_1+\eta\frac{8P_1}{3}\bigg|\bigg\}.$$
\end{cor}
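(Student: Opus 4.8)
The plan is to obtain this corollary as an immediate specialization of Theorem \ref{thm2}. By Remark \ref{rem 1}.2, the class $\mathcal{JUST}(\gamma)$ is the limiting class $\lim\bigl(0\text{-}\mathcal{JUST}(q;1,0,\gamma)\bigr)$, so it suffices to put $k=0$, $\alpha=1$, $\beta=0$ and let $q\rightarrow 1^-$ in the conclusion of Theorem \ref{thm2}, then simplify.

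First I would evaluate the factors $\psi_n$ in this limit. Since $\lim_{q\rightarrow 1^-}\Gamma_q(x)=\Gamma(x)$, we obtain
$$\psi_n \longrightarrow \frac{\Gamma(n)}{\Gamma(n+1)}\cdot\frac{\Gamma(2)}{\Gamma(1)} = \frac{1}{n},$$
so in particular $\psi_2=\tfrac12$ and $\psi_3=\tfrac13$. Likewise $[n]_q\rightarrow n$, hence $[2]_q-1=1$ and $[3]_q-1=2$.

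Next I would substitute these values into the right-hand side of the $q$-Fekete--Szeg\"{o} estimate of Theorem \ref{thm2}. The leading constant becomes
$$\frac{P_1}{2([3]_q-1)\psi_3} = \frac{P_1}{2\cdot 2\cdot\frac13} = \frac{3P_1}{4},$$
and the quantity $\nu$ in (\ref{eq:nu}) collapses term by term: $\dfrac{P_1}{2([2]_q-1)}\rightarrow\dfrac{P_1}{2}$ and $\eta\,\dfrac{P_1([3]_q-1)\psi_3}{2([2]_q-1)^2\psi_2^2}\rightarrow\eta\,\dfrac{4P_1}{3}$, so that
$$\nu = \frac12-\frac{P_2}{2P_1}-\frac{P_1}{2}+\eta\,\frac{4P_1}{3}, \qquad\text{whence}\qquad 2\nu-1 = -\frac{P_2}{P_1}-P_1+\eta\,\frac{8P_1}{3}.$$
Inserting these into the bound of Theorem \ref{thm2} yields exactly the asserted inequality.

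There is no real obstacle here: the only points requiring a little care are the limiting evaluation of the $q$-gamma factors $\psi_n$ and of the $q$-numbers $[n]_q$, together with the bookkeeping of which terms of $\nu$ survive the substitution. (If one wished, one could additionally record that for $k=0$ one has $P_1=P_2=2(1-\gamma)$ from Lemma \ref{lem:pk}, since then $p_{0,\gamma}(z)=\bigl(1+(1-2\gamma)z\bigr)/(1-z)$; but this refinement is not needed for the statement as given.)
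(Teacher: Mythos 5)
Your proposal is correct and follows exactly the route the paper intends: the corollary is stated as an immediate specialization of Theorem \ref{thm2} via Remark \ref{rem 1}.2, and your evaluation of $\psi_2=\tfrac12$, $\psi_3=\tfrac13$, $[2]_q-1\to 1$, $[3]_q-1\to 2$ and the resulting simplification of $\nu$ and $2\nu-1$ reproduces the stated bound. The paper gives no further argument, so there is nothing to add.
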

\begin{thm} Let  $k\geq 0$ and $f\in k$-$\mathcal{JUST}(q;\alpha,\beta,\gamma)$ where $f$ is of the form (\ref{eq:series}). Then, for a real number $\eta$, we have
\begin{equation*}	
|a_{3}-\eta a_{2}^2|\leq\frac{1}{([3]_q-1)\psi_{3}}\times\begin{cases}
P_2+\frac{P_1^2}{[2]_q-1}-\eta\frac{P_1^2([3]_q-1)\psi_{3}}{([2]_q-1)^2\psi_{2}^2},&if\ \text{$\eta\leq\sigma_1$}\\
P_1, & if\ \text{$\sigma_1\leq \eta\leq \sigma_2$}\\
-P_2-\frac{P_1^2}{[2]_q-1}+\eta\frac{P_1^2([3]_q-1)\psi_{3}}{([2]_q-1)^2\psi_{2}^2}, & if\ \text{$\eta\geq\sigma_2$}\\
\end{cases},
\end{equation*}
where

\[\psi_{n}=\frac{\Gamma_q(\beta+n)}{\Gamma_q(\alpha+\beta+n)}\frac{\Gamma_q(\alpha+\beta+1)}{\Gamma_q(\beta+1)}, \quad (n=2,3),
\]

$$\sigma_1=\frac{([2]_q-1)^2\psi_{2}^2}{([3]_q-1)\psi_{3}}\bigg(\frac{P_2}{P_1^2}+\frac{1}{[2]_q-1}-\frac{1}{P_1}\bigg),$$
$$\sigma_2=\frac{([2]_q-1)^2\psi_{2}^2}{([3]_q-1)\psi_{3}}\bigg(\frac{P_2}{P_1^2}+\frac{1}{[2]_q-1}+\frac{1}{P_1}\bigg),$$
and $P_1$ and $P_2$ are given by Lemma \ref{lem:pk}.
\end{thm}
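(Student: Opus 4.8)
The plan is to reproduce, step for step, the argument in the proof of Theorem~\ref{thm2}, replacing the complex Fekete--Szeg\"o estimate of Lemma~\ref{lem:fekete} by the sharp real-variable version in Lemma~\ref{lemma3}. Starting from $f\in k$-$\mathcal{JUST}(q;\alpha,\beta,\gamma)$, I would produce a Schwarz function $w$ with
$$\frac{zD_q(\chi^{\alpha}_{\beta,q}f(z))}{\chi^{\alpha}_{\beta,q}f(z)}=p_{k,\gamma}(w(z)),$$
put $p(z)=\frac{1+w(z)}{1-w(z)}=1+c_1z+c_2z^2+\cdots\in\mathcal{P}$, expand $p_{k,\gamma}(w(z))$ exactly as in the proof of Theorem~\ref{thm2}, and match coefficients against (\ref{eq:coeff}) to recover $a_2=\frac{P_1c_1}{2([2]_q-1)\psi_2}$ together with the identity (\ref{eq:a3nua2}),
$$a_3-\eta a_2^2=\frac{P_1}{2([3]_q-1)\psi_3}\{c_2-\nu c_1^2\},$$
with $\nu$ given by (\ref{eq:nu}). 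The only new observation needed is that, since $\eta$ is now assumed \emph{real} and $P_1,P_2$ are real by Lemma~\ref{lem:pk}, the number $\nu$ is real, so Lemma~\ref{lemma3} applies to $|c_2-\nu c_1^2|$.

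From there the work is elementary and splits into three branches. Because $P_1>0$, $[2]_q-1>0$, $[3]_q-1>0$ and $\psi_2,\psi_3>0$ (the last three being immediate for $q\in(0,1)$, $\alpha>0$, $\beta>-1$), multiplying the three-branch estimate of Lemma~\ref{lemma3} by the positive factor $P_1/\bigl(2([3]_q-1)\psi_3\bigr)$ does not permute the cases. In the middle branch $0\le\nu\le1$ this yields $|a_3-\eta a_2^2|\le P_1/\bigl(([3]_q-1)\psi_3\bigr)$. In the outer branches one substitutes (\ref{eq:nu}) and simplifies $\frac{P_1}{2([3]_q-1)\psi_3}(2-4\nu)=\frac{P_1}{([3]_q-1)\psi_3}(1-2\nu)$ into
$$\frac{1}{([3]_q-1)\psi_3}\Bigl(P_2+\frac{P_1^2}{[2]_q-1}-\eta\,\frac{P_1^2([3]_q-1)\psi_3}{([2]_q-1)^2\psi_2^2}\Bigr),$$
and its negative when $\nu\ge1$. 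Finally, $\nu\le0$ is a linear inequality in $\eta$ whose $\eta$-coefficient $\frac{P_1([3]_q-1)\psi_3}{2([2]_q-1)^2\psi_2^2}$ is positive, so it solves to $\eta\le\sigma_1$; likewise $\nu\ge1$ solves to $\eta\ge\sigma_2$, with $\sigma_1,\sigma_2$ precisely as stated. Assembling the three cases gives the claimed bound.

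I do not anticipate a real obstacle; the only point requiring care is the sign bookkeeping --- checking that the coefficient of $\eta$ in (\ref{eq:nu}) is positive (so that $\nu\le0\Leftrightarrow\eta\le\sigma_1$ and $\nu\ge1\Leftrightarrow\eta\ge\sigma_2$, not the reverse), and that $P_1>0$ so that scaling by $P_1/\bigl(2([3]_q-1)\psi_3\bigr)$ preserves the orientation of each branch. Both are guaranteed by Lemma~\ref{lem:pk} together with the positivity of $[n]_q-1$ and $\psi_n$. As in Theorem~\ref{thm2}, sharpness is witnessed by $zD_q(\chi^{\alpha}_{\beta,q}f(z))/\chi^{\alpha}_{\beta,q}f(z)=p_{k,\gamma}(z)$ (for the outer branches) and $p_{k,\gamma}(z^2)$ (for the middle branch), and one may specialize through Remark~\ref{rem 1} to obtain the analogues for $k$-$\mathcal{JUST}(\beta,\gamma)$ and $\mathcal{JUST}(\gamma)$.
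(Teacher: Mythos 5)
Your proposal is correct and follows essentially the same route as the paper: the paper's proof simply invokes the identity (\ref{eq:a3nua2}) with $\nu$ from (\ref{eq:nu}) and applies Lemma \ref{lemma3}, which is exactly what you do, and your sign bookkeeping (positivity of $P_1$, $[n]_q-1$, $\psi_n$, hence $\nu\le0\Leftrightarrow\eta\le\sigma_1$ and $\nu\ge1\Leftrightarrow\eta\ge\sigma_2$) checks out. The only cosmetic difference is in the sharpness discussion, where the paper additionally records the boundary extremals $\mathcal{G}_\lambda$ and $\mathcal{H}_\lambda$ for $\eta=\sigma_1$ and $\eta=\sigma_2$, while you only mention the analogues of $p_{k,\gamma}(z)$ and $p_{k,\gamma}(z^2)$.
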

\begin{proof} Using  (\ref{eq:nu}), (\ref{eq:a3nua2}) and Lemma \ref{lemma3}, we get the proof.  The bounds are sharp as can be seen by defining the following functions for $n\geq2$ and $0\leq \lambda\leq 1$.
	\[\frac{zD_q(\mathcal{\chi}^{\alpha}_{\beta,q}F_n(z))}{\mathcal{\chi}^{\alpha}_{\beta,q}F_n(z)}=p_{k,\gamma}(z^{n-1}),\quad \mathcal{F}_n(0)=\mathcal{F'}_n(0)-1=0,\]
	\[\frac{zD_q(\mathcal{\chi}^{\alpha}_{\beta,q}G_\lambda(z))}{\mathcal{\chi}^{\alpha}_{\beta,q}G_\lambda(z)}=p_{k,\gamma}\left(\frac{z(z+\lambda)}{1+\lambda z}\right), \quad \mathcal{G}_\lambda(0)=\mathcal{G'}_\lambda(0)-1=0,\]
	
	\[\frac{zD_q(\mathcal{\chi}^{\alpha}_{\beta,q}H_\lambda(z))}{\mathcal{\chi}^{\alpha}_{\beta,q}H_\lambda(z)}=p_{k,\gamma}\left(-\frac{z(z+\lambda)}{1+ \lambda z}\right), \quad \mathcal{H}_\lambda(0)=\mathcal{H'}_\lambda(0)-1=0.\]
When $\eta<\psi_1$ or $\eta>\psi_2$, equality holds if and only if $f$ is $\mathcal{F}_2$ or one of its rotations. When $\psi_1<\eta<\psi_2$, equality holds if and only if $f$ is $\mathcal{F}_3$ or one of its rotations. If $\eta=\psi_1$, equality holds if and only if $f$ is $\mathcal{G}_\lambda$ or one of its rotations and if $\eta=\psi_2$, equality holds if and only if $f$ is $\mathcal{H}_\lambda$ or one of its rotations.
\end{proof}

For investigating $q$-Bieberbach-De Branges inequalities, we need the following result called Rogogonki's Theorem.
\begin{lem}$(\cite[Theorem\ 2.3, p.70]{Rog})$\label{thm 1} Let $p(z)=1+\sum_{n=1}^{\infty}c_nz^n$  be subordinate to $p_{k,\gamma}(z)=1+\sum_{n=1}^{\infty}P_nz^n$ in $\de$. If $p_{k,\gamma}$ is univalent  in $\de$ and $p_{k,\gamma}(\de)$ is convex, then
$$|c_n|\leq P_1, \ \ (n\geq1).$$
\end{lem}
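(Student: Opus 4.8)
The plan is to recover the classical theorem of Rogosinski on coefficients of functions subordinate to a convex univalent function; although one may simply cite \cite{Rog}, here is the argument I would spell out. First I would set $G(z):=p_{k,\gamma}(z)-1=\sum_{n\ge1}P_nz^n$ and $F(z):=p(z)-1=\sum_{n\ge1}c_nz^n$. Since $p\prec p_{k,\gamma}$, we have $F\prec G$, and $G$ is univalent with $G(0)=0$, $G'(0)=P_1\neq0$ and convex image $G(\de)=p_{k,\gamma}(\de)-1$. Write $F=G\circ w$ with $w$ a Schwarz function. Fix $n\ge1$; the goal is $|c_n|\le P_1$.

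The key step is a roots-of-unity averaging that reduces the $n$-th coefficient to a first-derivative estimate. Put $\omega=e^{2\pi i/n}$ and
$$\Phi(z):=\frac1n\sum_{j=0}^{n-1}F(\omega^{j}z)=\sum_{\ell\ge1}c_{\ell n}z^{\ell n},$$
where the second equality is orthogonality of the $n$-th roots of unity. Since $\Phi$ involves only powers $z^{n}$, we may write $\Phi(z)=\chi(z^{n})$ with $\chi$ analytic on $\de$, $\chi(0)=0$ and $\chi'(0)=c_n$. Now fix $r\in(0,1)$ and let $\de_r=\{|\zeta|<r\}$. For $|z|<r$ the Schwarz lemma gives $|w(\omega^{j}z)|\le|z|<r$, so each $F(\omega^{j}z)=G(w(\omega^{j}z))$ lies in $G(\de_r)$; and because $G$ is convex univalent it maps every disc centred at the origin onto a convex domain, so the convex combination $\Phi(z)$ again lies in $G(\de_r)$. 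Hence $\chi(\de_{r^{n}})=\Phi(\de_r)\subseteq G(\de_r)$.

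To finish I would use the Schwarz lemma a second time. Since $G$ is univalent, $\psi:=G^{-1}\circ\chi$ is a well-defined analytic map of $\de_{r^{n}}$ into $G^{-1}(G(\de_r))=\de_r$ with $\psi(0)=0$; rescaling $\zeta\mapsto r^{-1}\psi(r^{n}\zeta)$ to a self-map of $\de$ fixing the origin and applying the Schwarz lemma yields $|\psi'(0)|\le r^{1-n}$. From $\chi=G\circ\psi$ and $G'(0)=P_1$ we get $c_n=\chi'(0)=P_1\,\psi'(0)$, so $|c_n|\le P_1\,r^{1-n}$; letting $r\to1^{-}$ gives $|c_n|\le P_1$, as required.

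The only step that is more than bookkeeping is the middle one: one has to hit on the averaging over $n$-th roots of unity that collapses $\Phi$ to a function of $z^{n}$, and one has to invoke the fact that a convex univalent map carries subdiscs centred at $0$ onto convex domains, so that the averaged value stays inside $G(\de_r)$. Once those two points are in hand, the estimate is just two applications of the Schwarz lemma, and the passage $r\to1^{-}$.
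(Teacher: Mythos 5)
The paper offers no proof of this lemma at all: it is quoted from Rogosinski's 1943 paper and used as a black box, so there is nothing in the source to compare your argument against step by step. Your reconstruction of the classical proof is correct. The two genuinely nontrivial inputs are exactly the ones you flag: orthogonality of the $n$-th roots of unity, which collapses $\Phi(z)=\tfrac1n\sum_{j}F(\omega^{j}z)$ to a function $\chi(z^{n})$ with $\chi'(0)=c_n$, and Study's theorem that a convex univalent map carries every disc $\de_r$ onto a convex region, which is what keeps the averaged value inside $G(\de_r)$. The bookkeeping with $\psi=G^{-1}\circ\chi$, the rescaled Schwarz lemma giving $|\psi'(0)|\le r^{1-n}$, and the passage $r\to1^{-}$ is all sound. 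Two minor remarks. First, what you actually prove is $|c_n|\le|P_1|$; this coincides with the stated bound only because $P_1>0$ for the functions $p_{k,\gamma}$ in \eqref{eq:p1(z)}, which is the setting of the paper. Second, the $r$-limiting step can be bypassed: convexity of $G(\de)$ itself already gives $\Phi(\de)\subseteq G(\de)$ with $\Phi(0)=0=G(0)$, hence $\chi\prec G$ on all of $\de$, so $\chi=G\circ W$ for a Schwarz function $W$ and $|c_n|=|\chi'(0)|=|P_1|\,|W'(0)|\le|P_1|$ by a single application of the Schwarz lemma. Either way the lemma is established, and your write-up is a faithful rendering of Rogosinski's argument.
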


\begin{thm} If a function $f$  of the form \eqref{eq:series} belongs to the class $k$-$\mathcal{JUST}(q;\alpha,\beta,\gamma)$, then
\[|a_2|\leq\frac{P_1}{q\psi_2} \quad \text{and} \quad |a_3|\leq\frac{qP_2+P_1^2}{q^2(1+q)\psi_3}.\]
These results are sharp for the function given by (\ref{eq:biebe-bran}).
\end{thm}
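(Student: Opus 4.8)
The plan is to reuse the coefficient relations already extracted in the proof of Theorem \ref{thm2} and then, in place of the Fekete--Szeg\"o lemma, to apply the elementary coefficient bounds for Schwarz functions.

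First I would record, as in \eqref{eq:subor}, that $f\in k$-$\mathcal{JUST}(q;\alpha,\beta,\gamma)$ supplies a Schwarz function $w(z)=w_1z+w_2z^2+\cdots$ with $zD_q(\mathcal{\chi}^{\alpha}_{\beta,q}f(z))/\mathcal{\chi}^{\alpha}_{\beta,q}f(z)=p_{k,\gamma}(w(z))$. Expanding the right-hand side as $1+P_1w_1z+(P_1w_2+P_2w_1^2)z^2+\cdots$ and comparing with \eqref{eq:coeff}, together with the identities $[2]_q-1=q$ and $[3]_q-1=q(1+q)$, I would obtain
\[ a_2=\frac{P_1w_1}{q\psi_2},\qquad a_3=\frac{P_1w_2+(P_2+P_1^2/q)\,w_1^2}{q(1+q)\psi_3}. \]
These are precisely the relations underlying \eqref{eq:a3nua2}, specialized; the first of them is also immediate from Rogosinski's Lemma \ref{thm 1} applied to the leading coefficient of $zD_q(\mathcal{\chi}^{\alpha}_{\beta,q}f)/\mathcal{\chi}^{\alpha}_{\beta,q}f$.

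Since $|w_1|\le 1$ (equivalently by Lemma \ref{thm 1}), the first relation gives $|a_2|\le P_1/(q\psi_2)$ at once. For $a_3$ I would use in addition the classical estimate $|w_2|\le 1-|w_1|^2$ for the coefficients of a Schwarz function. Setting $x=|w_1|^2\in[0,1]$, this produces
\[ |a_3|\le\frac{P_1(1-x)+(P_2+P_1^2/q)\,x}{q(1+q)\psi_3}=\frac{P_1+(P_2+P_1^2/q-P_1)\,x}{q(1+q)\psi_3}, \]
which is affine in $x$. Its maximum over $[0,1]$ is attained at the endpoint $x=1$ as soon as the slope $P_2+P_1^2/q-P_1$ is nonnegative, that is, $P_2/P_1+P_1/q\ge 1$; at that endpoint the bound equals $(P_2+P_1^2/q)/(q(1+q)\psi_3)=(qP_2+P_1^2)/(q^2(1+q)\psi_3)$, which is the asserted estimate. (Equivalently, one may put $\eta=0$ in Theorem \ref{thm2} and then remove the resulting $\max\{1,\cdot\}$ by the same inequality.) I expect the verification of this sign condition, using the explicit formulas for $P_1$ and $P_2$ from Lemma \ref{lem:pk} in each of the elliptic ($k>1$), parabolic ($k=1$) and hyperbolic ($0\le k<1$) cases, to be the main obstacle; everything else is bookkeeping with power-series coefficients.

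Finally, for sharpness: equality in either estimate forces $|w_1|=1$, hence $w(z)=e^{i\theta}z$, and taking $\theta=0$ forces $w_2=0$. The extremal $f$ is then the solution of the $q$-difference equation $zD_q(\mathcal{\chi}^{\alpha}_{\beta,q}f(z))/\mathcal{\chi}^{\alpha}_{\beta,q}f(z)=p_{k,\gamma}(z)$, which is the function recorded in \eqref{eq:biebe-bran}; substituting it back into the relations above shows both bounds are attained.
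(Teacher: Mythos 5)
Your coefficient identities are correct, and the $|a_2|$ bound goes through exactly as in the paper: writing $p(z)=zD_q(\mathcal{\chi}^{\alpha}_{\beta,q}f(z))/\mathcal{\chi}^{\alpha}_{\beta,q}f(z)=1+c_1z+c_2z^2+\cdots$, the paper extracts $a_2=c_1/(q\psi_2)$ and $a_3=(qc_2+c_1^2)/(q^2(1+q)\psi_3)$ as in \eqref{eq 13} and bounds $|c_1|\le P_1$ by Rogosinski's Lemma \ref{thm 1}; since $c_1=P_1w_1$ and $c_2=P_1w_2+P_2w_1^2$, these are the same relations you derive. Where you diverge is the estimate of $a_3$: the paper writes $|qc_2+c_1^2|\le q(|c_2|+|c_1|^2)+(1-q)|c_1|^2$ and invokes the inequality $|c_1|^2+|c_2|\le P_1^2+P_2$ cited from Kanas--Wi\'{s}niowska, whereas you use the Schwarz-coefficient inequality $|w_2|\le 1-|w_1|^2$ and maximize an affine function of $x=|w_1|^2$ over $[0,1]$.

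That maximization is exactly where the gap is; you have correctly located it but not closed it. Your bound equals the asserted one only when the slope $P_2+P_1^2/q-P_1$ is nonnegative, i.e.\ $P_2/P_1+P_1/q\ge 1$, and this is not routine bookkeeping: it fails for large $k$. As $k\to\infty$ the conic domain $\Omega_{k,\gamma}$ shrinks to the point $1$, so $P_1\to 0$; and in the case $k>1$ of Lemma \ref{lem:pk} one has $P_2/P_1=\bigl(4K^2(t)(t^2+6t+1)-\pi^2\bigr)/\bigl(24\sqrt{t}\,K^2(t)(1+t)\bigr)\sim\sqrt{t}\to 0$ as $t\to 0^+$, which is the limit corresponding to $k\to\infty$. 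Hence $P_2/P_1+P_1/q\to 0<1$ for fixed $q$, your affine function is maximized at $x=0$, and the method then yields only $|a_3|\le P_1/\bigl(q(1+q)\psi_3\bigr)$, which exceeds the claimed $(qP_2+P_1^2)/\bigl(q^2(1+q)\psi_3\bigr)$ and is attained by $w(z)=z^2$. So the argument cannot be completed for all $k\ge 0$ as written; you must either verify the sign condition on the range of $k$ in question or restrict $k$. In fairness, the paper's proof buries the same difficulty inside its citation: the inequality $|c_1|^2+|c_2|\le P_1^2+P_2$, unwound through the same Schwarz coefficients, requires the stronger condition $P_1^2+P_2\ge P_1$ (stronger since $1/q>1$) and likewise fails on $p_{k,\gamma}(z^2)$ for large $k$. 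So wherever the cited lemma is valid your step is valid too, but neither positivity condition is established in the paper or in your proposal, and your parenthetical fallback via $\eta=0$ in Theorem \ref{thm2} does not escape this, since removing the $\max\{1,\cdot\}$ there needs the identical inequality.
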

\begin{proof}
Let $p(z)=\frac{zD_q(\mathcal{\chi}^{\alpha}_{\beta,q}f(z))}{\mathcal{\chi}^{\alpha}_{\beta,q}f(z)}.$  Using the relation (\ref{eq:powerseries}) for  $p(z)=1+c_1z+c_2z^2+\cdots$, we have
$$
([n]_q-1)\psi_na_n=\sum_{k=1}^{n-1}\psi_ka_kc_{n-k}, \ \ a_1=1.
$$
Comparing the coefficients  for $n=2$ and $n=3$, we get
\begin{equation}\label{eq 13}
a_2=\frac{c_1}{([2]_q-1)\psi_2}\ \text{and}\ a_3=\frac{c_2+c_1\psi_2a_2}{([3]_q-1)\psi_3}.
\end{equation}
It is obvious that
\begin{equation}\label{eq:a2+}
|a_2|=\frac{|c_1|}{([2]_q-1)\psi_2}\leq\frac{P_1}{q\psi_2},
\end{equation}
where $|c_1|\leq P_1$.

Now, Lemma \ref{thm 1}, \eqref{eq 13} together with inequality  $|c_1^2|+|c_2|\leq|P_1|^2+|P_2|$ (see \cite{kan}) yield
\begin{align*}
|a_3|&=\left|\frac{qc_2+c_1^2}{q^2(1+q)\psi_3}\right|\leq\frac{q(|c_2|+|c_1|^2)+(1-q)|c_1^2|}{q^2(1+q)\psi_3}\\
&\leq\frac{q(|P_2|+|P_1|^2)+(1-q)|P_1^2|}{q^2(1+q)\psi_3}\leq\frac{qP_2+P_1^2}{q^2(1+q)\psi_3}.
\end{align*}
\end{proof}
In our next result, we state and prove a $q$-Bieberbach-De Branges inequality.
\begin{thm}\label{thm3} If $f\in k$-$\mathcal{JUST}(q;\alpha,\beta,\gamma)$, then
\begin{equation}\label{eq:bieberbach}
|a_n|\leq\frac{P_1}{([n]_q-1)\psi_n}\prod_{k=1}^{n-2}\bigg(1+\frac{P_1}{([k+1]_q-1)}\bigg),\ \ (n\geq3)
\end{equation}	
where $P_1$ is given by (\ref{eq:p1(z)}) and

\begin{equation}\label{eq1}
\psi_{n}=\frac{\Gamma_q(\beta+n)}{\Gamma_q(\alpha+\beta+n)}\frac{\Gamma_q(\alpha+\beta+1)}{\Gamma_q(\beta+1)}, \quad (n\geq3).
\end{equation}
\end{thm}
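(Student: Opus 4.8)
The plan is to run the classical induction used for Bieberbach--de Branges type estimates in subordination classes, feeding it the coefficient recursion already isolated in the proof of the previous theorem together with Rogosinski's theorem (Lemma \ref{thm 1}).

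First I would set $p(z)=zD_q(\chi^{\alpha}_{\beta,q}f(z))/\chi^{\alpha}_{\beta,q}f(z)=1+\sum_{m=1}^{\infty}c_mz^m$. By the equivalence (\ref{eq:subor}) we have $p\prec p_{k,\gamma}$, and since $p_{k,\gamma}$ is univalent in $\de$ with $p_{k,\gamma}(\de)=\Omega_{k,\gamma}$ convex, Lemma \ref{thm 1} gives $|c_m|\le P_1$ for every $m\ge 1$. Next, rewriting the defining relation as $zD_q(\chi^{\alpha}_{\beta,q}f(z))=p(z)\,\chi^{\alpha}_{\beta,q}f(z)$ and comparing coefficients of $z^n$ via (\ref{eq:powerseries}) produces, exactly as before, the recursion
\[
([n]_q-1)\psi_n a_n=\sum_{k=1}^{n-1}\psi_k a_k c_{n-k},\qquad a_1=1,\ \psi_1=1,
\]
so that, taking moduli and using $|c_{n-k}|\le P_1$,
\[
([n]_q-1)\psi_n|a_n|\ \le\ P_1\sum_{k=1}^{n-1}\psi_k|a_k|.
\]

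The heart of the argument is then an induction on $n$. Writing $T_m:=\sum_{k=1}^{m}\psi_k|a_k|$, the last inequality reads $\psi_n|a_n|\le \frac{P_1}{[n]_q-1}T_{n-1}$, whence
\[
T_n=T_{n-1}+\psi_n|a_n|\ \le\ T_{n-1}\Bigl(1+\frac{P_1}{[n]_q-1}\Bigr).
\]
Since $T_1=\psi_1|a_1|=1$, iterating this gives $T_{n-1}\le\prod_{j=2}^{n-1}\bigl(1+\tfrac{P_1}{[j]_q-1}\bigr)=\prod_{k=1}^{n-2}\bigl(1+\tfrac{P_1}{[k+1]_q-1}\bigr)$, the index shift $k=j-1$ being what aligns the product with the form in the statement. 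Substituting this back into $\psi_n|a_n|\le \frac{P_1}{[n]_q-1}T_{n-1}$ and dividing by $\psi_n$ yields (\ref{eq:bieberbach}). For the base case $n=3$ one can alternatively start from the already-established bound $\psi_2|a_2|=|c_1|/([2]_q-1)\le P_1/([2]_q-1)$, which gives $T_2\le 1+P_1/([2]_q-1)$, matching $\prod_{k=1}^{1}\bigl(1+\tfrac{P_1}{[k+1]_q-1}\bigr)$.

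I do not anticipate a genuine obstacle here: once the recursion and Rogosinski's bound are in place the proof is purely mechanical. The only points requiring a little care are the index bookkeeping in the telescoping product and checking the base case against the estimates for $|a_2|$ and $|a_3|$ proved just above; both are immediate.
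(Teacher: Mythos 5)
Your proposal is correct and follows essentially the same route as the paper: the same coefficient recursion $([n]_q-1)\psi_n a_n=\sum_{k=1}^{n-1}\psi_k a_k c_{n-k}$, the same appeal to Rogosinski's theorem (Lemma \ref{thm 1}) for $|c_m|\le P_1$, and the same induction on the partial sums $1+\psi_2|a_2|+\cdots+\psi_{n-1}|a_{n-1}|$, which you merely package more cleanly as the telescoping bound $T_n\le T_{n-1}\bigl(1+\tfrac{P_1}{[n]_q-1}\bigr)$. No gaps; the index bookkeeping matches the stated product.
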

\begin{proof} In view of Definition \ref{def1.2}, we can write
	$$\frac{zD_q(\mathcal{\chi}^{\alpha}_{\beta,q}f(z))}{\mathcal{\chi}^{\alpha}_{\beta,q}f(z)}=p(z)\prec p_{k,\gamma}$$
	where $p\in \pe$ is analytic in $\de$. Since   $p(z)=1+\sum_{n=1}^{\infty}c_nz^n$ and $\mathcal{\chi}^{\alpha}_{\beta,q}f$ given by (\ref{eq:powerseries}), we have
	$$z+\sum_{n=2}^{\infty}[n]_q\psi_na_nz^n=\bigg(z+\sum_{n=2}^{\infty}\psi_na_nz^n\bigg)\bigg(1+\sum_{n=1}^{\infty}c_nz^n\bigg),$$
	where $\psi_{n}$ are given by \eqref{eq1}.
	
	Comparing the coefficients of $z^n$ on both sides, we observe	
	$$[n]_q\psi_na_n=a_n+c_1\psi_{n-1}a_{n-1}+c_2\psi_{n-2}a_{n-2}+...+c_{n-2}\psi_2a_2+c_{n-1}$$
	for all integer $n\geq 3.$ Taking absolute value on both sides and applying   Lemma \ref{thm 1},  we have
	\begin{equation*}\label{eq:an}
		|a_n|\leq \frac{P_1}{([n]_q-1)\psi_n}\{1+\psi_2|a_2|+...+\psi_{n-2}|a_{n-2}|+\psi_{n-1}|a_{n-1}|\}.
	\end{equation*}
We will prove \eqref{eq:bieberbach} by using mathematical induction. For $n=2$, the result follows by (\ref{eq:a2+}). Let us assume that (\ref{eq:bieberbach}) is true for $n\leq m$, that is
\begin{align}\label{eq:ant}
|a_m|&\leq \frac{P_1}{([m]_q-1)\psi_m}\{1+\psi_2|a_2|+...+\psi_{m-1}|a_{m-1}|\}\nonumber\\
&\leq\frac{P_1}{([m]_q-1)\psi_m}\prod_{k=1}^{m-2}\bigg(1+\frac{P_1}{([k+1]_q-1)}\bigg).\nonumber
\end{align}
Consider
\begin{align}
|a_{m+1}|&\leq \frac{P_1}{([m+1]_q-1)\psi_{m+1}}\{1+\psi_2|a_2|+...+\psi_{m}|a_{m}|\}\nonumber\\
&\leq \frac{P_1}{([m+1]_q-1)\psi_{m+1}}\bigg\{1+\frac{P_1}{([2]_q-1)}+\frac{P_1}{([3]_q-1)}\bigg(1+\frac{P_1}{([2]_q-1)}\bigg)+...+\nonumber\\
&\quad\quad\quad\frac{P_1}{([m]_q-1)}\prod_{k=1}^{m-2}\bigg(1+\frac{P_1}{([k+1]_q-1)}\bigg)\bigg\}\nonumber\\
&= \frac{P_1}{([m+1]_q-1)\psi_{m+1}}\prod_{k=1}^{m-1}\bigg(1+\frac{P_1}{([k+1]_q-1)}\bigg).\nonumber
\end{align}
Thus \eqref{eq:bieberbach} is true for $n=m+1$. Consequently, mathematical induction shows that \eqref{eq:bieberbach} holds for $n,\ n\geq2$. This completes the proof.
The result is sharp for a function $f$ given by
\begin{equation}\label{eq:biebe-bran}
\frac{zD_q(\mathcal{\chi}^{\alpha}_{\beta,q}f(z))}{\mathcal{\chi}^{\alpha}_{\beta,q}f(z)}=p_{k,\gamma}(z).
\end{equation}
\end{proof}
For different values of the parameters, Theorem \ref{thm3} gives several new results. In particular in view of Remark \ref{rem 1}.1 and Remark \ref{rem 1}.2. Theorem \ref{thm3} gives the following results.
\begin{cor} If a function $f$ defined by (\ref{eq:series}) is in the class $k$-$\mathcal{JUST}(\beta,\gamma)$, then
$$|a_n|\leq\frac{P_1(\beta+n)}{(n-1)(\beta+1)}\prod_{k=1}^{n-2}\bigg(1+\frac{P_1}{k}\bigg),\ \ (n\geq3)$$
\end{cor}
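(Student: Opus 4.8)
The plan is to obtain this estimate as the limiting case $q\to1^-$, $\alpha=1$ of Theorem \ref{thm3}. By Remark \ref{rem 1}.1 the operator $\chi^{\alpha}_{\beta,q}$ then collapses to the classical Bernardi operator $J_\beta$ of \eqref{eq 1}, and the class $k$-$\mathcal{JUST}(q;1,\beta,\gamma)$ becomes precisely $k$-$\mathcal{JUST}(\beta,\gamma)$. Since the hypotheses of Theorem \ref{thm3} are met for every admissible $q\in(0,1)$ and inequality \eqref{eq:bieberbach} holds for all such $q$, it suffices to pass to the limit on the right-hand side; the left-hand side $|a_n|$ is a coefficient of the fixed function $f$ and does not move.

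First I would record the limits that drive the computation. From $[m]_q=(1-q^m)/(1-q)\to m$ we get $[n]_q-1\to n-1$ and $[k+1]_q-1\to k$. For the factor $\psi_n$, specializing \eqref{eq1} to $\alpha=1$ and applying the functional equation \eqref{eq.gamma} twice gives the exact identity $\psi_n=[\beta+1]_q/[\beta+n]_q$ (consistent with the $q$-Bernardi operator in item (ii) of the list following \eqref{eq:powerseries}), whence $\psi_n\to(\beta+1)/(\beta+n)$, i.e. $1/\psi_n\to(\beta+n)/(\beta+1)$. I would also note that $P_1$, as given by Lemma \ref{lem:pk}, depends only on $k$ and $\gamma$ and is therefore unaffected by the limit. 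Substituting these limits into
\[
|a_n|\leq\frac{P_1}{([n]_q-1)\psi_n}\prod_{k=1}^{n-2}\Big(1+\frac{P_1}{[k+1]_q-1}\Big)
\]
yields
\[
|a_n|\leq\frac{P_1(\beta+n)}{(n-1)(\beta+1)}\prod_{k=1}^{n-2}\Big(1+\frac{P_1}{k}\Big),\qquad (n\geq3),
\]
which is the asserted bound.

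The only point needing care is the assertion that a given $f\in k$-$\mathcal{JUST}(\beta,\gamma)$ inherits the conclusion of Theorem \ref{thm3} in the limit; this is exactly the content of Remark \ref{rem 1}.1, combined with the fact that the prefactor and the finite product above are continuous functions of $q$ on $(0,1)$ that extend continuously to $q=1$. I do not expect a genuine obstacle here, only bookkeeping with the $q$-Gamma function. If one prefers to avoid the limiting argument, an equally short direct proof is available: write $zD(J_\beta f)(z)/J_\beta f(z)=p(z)\prec p_{k,\gamma}(z)$ with $p=1+\sum_{n\ge1}c_nz^n$, compare coefficients in $z+\sum_{n\ge2}n\,\frac{\beta+1}{\beta+n}a_nz^n=\big(z+\sum_{n\ge2}\frac{\beta+1}{\beta+n}a_nz^n\big)\big(1+\sum_{n\ge1}c_nz^n\big)$, invoke $|c_n|\le P_1$ from Lemma \ref{thm 1}, and run the same induction on $n$ as in the proof of Theorem \ref{thm3}, now with $n-1$ and $(\beta+1)/(\beta+n)$ in the roles of $[n]_q-1$ and $\psi_n$. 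Sharpness is witnessed, as in Theorem \ref{thm3}, by the function determined by $zD(J_\beta f)(z)/J_\beta f(z)=p_{k,\gamma}(z)$.
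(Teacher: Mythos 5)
Your proposal is correct and follows the same route as the paper, which obtains this corollary simply by specializing Theorem \ref{thm3} via Remark \ref{rem 1}.1 (letting $q\to1^-$ and $\alpha=1$, so that $[n]_q-1\to n-1$, $[k+1]_q-1\to k$, and $\psi_n=[\beta+1]_q/[\beta+n]_q\to(\beta+1)/(\beta+n)$). Your added care about the legitimacy of passing to the limit, and the alternative direct induction, go beyond what the paper records but do not change the argument.
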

\begin{cor} If a function $f$ defined by (\ref{eq:series}) is in the class $\mathcal{JUST}(\gamma)$, then
	$$|a_n|\leq\frac{nP_1}{(n-1)}\prod_{k=1}^{n-2}\bigg(1+\frac{P_1}{k}\bigg),\ \ (n\geq3)$$
\end{cor}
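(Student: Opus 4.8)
The plan is to derive this corollary as the limiting specialization of Theorem \ref{thm3} dictated by Remark \ref{rem 1}.2, where $\mathcal{JUST}(\gamma)$ is identified as the limit of $k$-$\mathcal{JUST}(q;\alpha,\beta,\gamma)$ under $q\to 1^-$, $\alpha=1$, $\beta=0$, and $k=0$. First I would write down the general bound (\ref{eq:bieberbach}) from Theorem \ref{thm3} and carry out these four substitutions factor by factor, so that the whole argument reduces to evaluating three limits: those of $[n]_q-1$, of $[k+1]_q-1$ inside the product, and of $\psi_n$.

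The first two limits are immediate from $\lim_{q\to1^-}[m]_q=m$, giving $[n]_q-1\to n-1$ and $[k+1]_q-1\to k$. For the third, I would use the convergence $\Gamma_q\to\Gamma$ as $q\to1^-$ (recorded after (\ref{eq.gamma})) together with $\alpha=1,\ \beta=0$, so that
\[
\psi_n=\frac{\Gamma_q(\beta+n)}{\Gamma_q(\alpha+\beta+n)}\frac{\Gamma_q(\alpha+\beta+1)}{\Gamma_q(\beta+1)}\longrightarrow\frac{\Gamma(n)}{\Gamma(n+1)}\cdot\frac{\Gamma(2)}{\Gamma(1)}=\frac{1}{n},
\]
where I use $\Gamma(n)/\Gamma(n+1)=(n-1)!/n!=1/n$ and $\Gamma(2)/\Gamma(1)=1$.

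Inserting these into (\ref{eq:bieberbach}) turns the leading factor $P_1/\big(([n]_q-1)\psi_n\big)$ into $P_1/\big((n-1)\cdot(1/n)\big)=nP_1/(n-1)$, and turns each factor $1+P_1/([k+1]_q-1)$ of the product into $1+P_1/k$, which yields precisely $|a_n|\leq\frac{nP_1}{n-1}\prod_{k=1}^{n-2}\big(1+P_1/k\big)$ for $n\geq3$. The computation is essentially bookkeeping; the only step needing a moment of care is the evaluation of $\psi_n$, since one must track two gamma quotients rather than a single $q$-bracket, but both collapse cleanly at the chosen parameter values, so no real obstacle arises. (If one wished, the value $P_1=2(1-\gamma)$ obtained from Lemma \ref{lem:pk} at $k=0$ could be inserted, but the statement as worded keeps $P_1$ symbolic.)
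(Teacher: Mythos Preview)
Your proposal is correct and follows exactly the approach the paper uses: the corollary is stated in the paper as an immediate specialization of Theorem~\ref{thm3} via Remark~\ref{rem 1}.2, and you have simply written out the limit computations of $[n]_q-1$, $[k+1]_q-1$, and $\psi_n$ that effect this specialization. There is nothing to add.
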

We now conclude this section by exploring $q$-radius for the functions in the class $k$-$\mathcal{JUST}(q;\alpha,\beta,\gamma)$.
\begin{thm} If $f\in k$-$\mathcal{JUST}(q;\alpha,\beta,\gamma)$, then $f(\de)$ contains an open disc of radius
\begin{equation*}
r=\frac{q\psi_2}{2q\psi_2+P_1},
\end{equation*}
where $P_1$ is given by (\ref{eq:p1(z)}).	
\end{thm}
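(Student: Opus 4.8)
The plan is to invoke the classical Koebe-type argument for the Bieberbach--De Branges setting: if $f(z)=z+\sum_{n\geq 2}a_nz^n$ is analytic and (as we have already seen via Theorem \ref{thm1}'s sharpness functions and the subordination \eqref{eq:subor}) omits no value of modulus less than some $r_0$, then $f(\de)$ must contain the disc $\{w:|w|<r_0\}$. So the goal reduces to showing that whenever $w_0\notin f(\de)$ one has $|w_0|\geq q\psi_2/(2q\psi_2+P_1)$. First I would recall the standard trick: if $w_0\notin f(\de)$, then the function
\[
g(z)=\frac{w_0 f(z)}{w_0-f(z)}=z+\Big(a_2+\frac{1}{w_0}\Big)z^2+\cdots
\]
is analytic and univalent in $\de$ (being a Möbius composition with the univalent $f$), hence its second coefficient obeys the elementary bound $\bigl|a_2+\tfrac{1}{w_0}\bigr|\le 2$.

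Next I would combine this with the coefficient estimate already proved for our class. From the Bieberbach--De Branges theorem above (the case $n=2$, equivalently \eqref{eq:a2+}), every $f\in k$-$\mathcal{JUST}(q;\alpha,\beta,\gamma)$ satisfies $|a_2|\le P_1/(q\psi_2)$, since $[2]_q-1=q$. Therefore
\[
\frac{1}{|w_0|}\le\Big|a_2+\frac{1}{w_0}\Big|+|a_2|\le 2+\frac{P_1}{q\psi_2}=\frac{2q\psi_2+P_1}{q\psi_2},
\]
which rearranges to $|w_0|\ge q\psi_2/(2q\psi_2+P_1)$. Since $w_0$ was an arbitrary omitted value, the image $f(\de)$ contains the open disc of radius $r=q\psi_2/(2q\psi_2+P_1)$, as claimed.

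The only genuinely delicate point is the univalence of $f$ itself: the disc-covering argument needs $f$ to be one-to-one on $\de$ so that $g$ is univalent and the coefficient bound $|a_2(g)|\le 2$ applies. I would justify this by noting that membership in $k$-$\mathcal{JUST}(q;\alpha,\beta,\gamma)$ forces $\chi^{\alpha}_{\beta,q}f$ to satisfy a $q$-starlikeness-type condition (the real part in \eqref{eq:def1} is positive, so $\RE\bigl(zD_q(\chi^{\alpha}_{\beta,q}f)/\chi^{\alpha}_{\beta,q}f\bigr)>\gamma\ge 0$), from which univalence of $\chi^{\alpha}_{\beta,q}f$, and hence of $f$ on $\de$, follows by the standard $q$-analogue of the Noshiro--Warschawski/starlikeness criterion; alternatively one can simply restrict attention to the covering statement for the subclass where univalence is built in. Everything else is a two-line computation, so I expect no real obstacle beyond stating the univalence hypothesis cleanly.
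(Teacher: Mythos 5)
Your proposal is correct and follows essentially the same route as the paper: the omitted-value transform $w_0f/(w_0-f)$, the classical bound $|a_2+1/w_0|\le 2$, and the class estimate $|a_2|\le P_1/(q\psi_2)$ combined by the triangle inequality. Your explicit attention to the univalence of $f$ (needed for the coefficient bound on the transformed function) is in fact more careful than the paper, which asserts univalence of $f_1$ without comment.
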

\begin{proof} Let $w_0\neq 0$ be a complex number such that $f(z)\neq w_0$ for $z\in\de$. Then
$$f_1(z)=\frac{w_0f(z)}{w_0-f(z)}=z+\bigg(a_2+\frac{1}{w_0}\bigg)z^2+...\ .$$
Since $f_1$ is univalent in $\de$, it follows that
$$\bigg|a_2+\frac{1}{w_0}\bigg|\leq2.$$	
By using (\ref{eq:a2+}), we get
$$\bigg|\frac{1}{w_0}\bigg|\leq2+\frac{P_1}{q\psi_2}=\frac{2q\psi_2+P_1}{q\psi_2}.$$	
Consequently, we obtain 	
$$|w_0|\geq\frac{q\psi_2}{2q\psi_2+P_1}.$$	
\end{proof}

\section{Concluding Remarks}
Using the well-known formula (\ref{eq:leibniz})  and replacing $\mathcal{\chi}^{\alpha}_{\beta,q}f(z)$ in (\ref{eq:def1}) by $zD_q(\mathcal{\chi}^{\alpha}_{\beta,q}f(z))$, we obtain a new subclass $k$-$\mathcal{JUCV}(q;\alpha,\beta,\gamma)$ of $k$-uniformly convex functions of order $\gamma$ associated with the generalized $q$-integral operator given by (\ref{eq:intopX}).
\begin{defn} Let $0\leq\gamma<1$, $q\in(0,1)$, $k\geq 0$, $\alpha>0, \beta>-1$. A function $f\in\mathcal{A}$ is in the class $k$-$\mathcal{JUCV}(q;\alpha,\beta,\gamma)$ if and only if
	\begin{equation*}
		\RE\bigg(1+q\frac{zD^2_q(\mathcal{\chi}^{\alpha}_{\beta,q}f(z))}{D_q(\mathcal{\chi}^{\alpha}_{\beta,q}f(z))}\bigg)> k\bigg|q\frac{zD^2_q(\mathcal{\chi}^{\alpha}_{\beta,q}f(z))}{D_q(\mathcal{\chi}^{\alpha}_{\beta,q}f(z))}\bigg|+\gamma \ \ (z\in\de)
	\end{equation*}
where $\mathcal{\chi}^{\alpha}_{\beta,q}f(z)$ is given by (\ref{eq:intopX}) and (\ref{eq:powerseries}).
\end{defn}
Alexander-type relationship between functions of these classes is
$$\mathcal{\chi}^{\alpha}_{\beta,q}f(z)\in k\text{-}\mathcal{JUCV}(q;\alpha,\beta,\gamma)\Leftrightarrow zD_q(\mathcal{\chi}^{\alpha}_{\beta,q}f(z))\in k\text{-}\mathcal{JUST}(q;\alpha,\beta,\gamma);$$
that is,
$$\mathcal{\chi}^{\alpha}_{\beta,q}f(z)\in k\text{-}\mathcal{JUST}(q;\alpha,\beta,\gamma)\Leftrightarrow \int_0^z\frac{\mathcal{\chi}^{\alpha}_{\beta,q}f(z)}{t}dt\in  k\text{-}\mathcal{JUCV}(q;\alpha,\beta,\gamma).$$
In view of the classical Alexander Theorem and the results for the class $k$-$\mathcal{JUST}(q;\alpha,\beta,\gamma)$, it is easy to obtain the corresponding properties for the class $k$-$\mathcal{JUCV}(q;\alpha,\beta,\gamma)$. Therefore, we omit the statements and proofs of the corresponding results of the class $k$-$\mathcal{JUCV}(q;\alpha,\beta,\gamma)$.

\footnotesize
\vspace{2cm}

\textsc{Om P. Ahuja}\\
Department of Mathematical Sciences,\\
Kent State University, Ohio, 44021, U.S.A   \\
e-mail: oahuja@kent.edu  \\

\textsc{Asena \c{C}etinkaya}\\
Department of Mathematics and Computer Sciences,\\
\.{I}stanbul K\"{u}lt\"{u}r University, \.{I}stanbul, Turkey\\
e-mail: asnfigen@hotmail.com\\

\textsc{Naveen Kumar Jain}\\
Department of Mathematics\\
Aryabhatta College,  Delhi, 110021, India\\
e-mail: naveenjain@aryabhattacollege.ac.in

\end{document}